\renewcommand{\abovecaptionskip}{0pt}
\renewcommand{\belowcaptionskip}{6pt}
\renewcommand{\@makecaption}[2]{
\vspace{\abovecaptionskip}%
\sbox{\@tempboxa}{#1. #2}%
\global\@minipagefalse \hbox to \hsize {{\scshape \hfil #1.
#2\hfil}} \vspace{\belowcaptionskip}}
\newcommand{\SL}{\operatorname{SL}}
\newcommand{\ad}{\mathrm{ad}}
\DeclareMathOperator{\Supp}{\mathrm{Supp}}
\DeclareMathOperator{\Hom}{\mathrm{Hom}}
\DeclareMathOperator{\ord}{\mathrm{ord}}
\DeclareMathOperator{\Spec}{\mathrm{Spec}}
\newcommand{\Dev}{\operatorname{Dev}}
\newcommand{\gr}{\operatorname{gr}}
\newcommand{\sat}{\mathrm{sat}}
\newcommand{\ZZ}{\mathbb Z}
\newcommand{\QQ}{\mathbb Q}
\newtheorem{theorem}{Theorem}
\newtheorem{proposition}[theorem]{Proposition}
\newtheorem{lemma}[theorem]{Lemma}
\newtheorem{corollary}[theorem]{Corollary}
\newtheorem*{question*}{Question}
\theoremstyle{definition}
\newtheorem{definition}[theorem]{Definition}
\newtheorem{example}[theorem]{Example}
\theoremstyle{remark}
\newtheorem{remark}[theorem]{Remark}
\numberwithin{equation}{section}
\begin{document}

\renewcommand{\proofname}{Proof}
\renewcommand{\abstractname}{Abstract}
\renewcommand{\refname}{References}
\renewcommand{\figurename}{Figure}
\renewcommand{\tablename}{Table}

\title[On the irreducible components of moduli schemes]
{On the irreducible components of moduli schemes \\ for affine
spherical varieties}

\author{Roman Avdeev and St\'ephanie Cupit-Foutou}


\address{%
{\bf Roman Avdeev} \newline\indent National Research University ``Higher School of Economics'', Moscow, Russia}

\email{suselr@yandex.ru}

\address{%
{\bf St\'ephanie Cupit-Foutou}
\newline\indent Ruhr-Universit\"at Bochum, NA 4/67,
D-44797 Bochum, Germany}

\email{stephanie.cupit@rub.de}


\subjclass[2010]{14M27, 14D22}

\keywords{Algebraic group, multiplicity-free variety, spherical variety, moduli space}

\begin{abstract}
We give a combinatorial description of all affine spherical varieties with prescribed weight monoid~$\Gamma$.
As an application, we obtain a characterization of the irreducible components of Alexeev and Brion's moduli scheme $\mathrm M_\Gamma$ for such varieties. Moreover, we find several sufficient conditions for $\mathrm M_\Gamma$ to be irreducible and exhibit several examples where $\mathrm M_\Gamma$ is reducible. Finally, we provide examples of non-reduced~$\mathrm M_\Gamma$.
\end{abstract}

\maketitle

\section{Introduction}

Throughout this paper, we work over an algebraically closed field $\Bbbk$ of characteristic~$0$.

Let $G$ be a connected reductive algebraic group.
Fix a Borel subgroup $B$ of $G$ along with a maximal torus $T$ in $B$ and denote the related set of dominant weights by $\Lambda^+$.

Given a $G$-variety, that is, an algebraic variety $X$ equipped with a regular action of~$G$, the action of $G$ on $X$ naturally induces a $G$-module structure on the algebra $\Bbbk[X]$ of regular functions on~$X$. When $X$ is irreducible, the highest weights of $\Bbbk[X]$
form a monoid $\Gamma_X$ called the \textit{weight monoid} of~$X$. If furthermore $X$ is affine, its weight monoid is finitely generated.

An affine $G$-variety $X$ is said to be \textit{multiplicity-free} if $X$ is irreducible and the $G$-module $\Bbbk[X]$ contains every simple $G$-module with multiplicity at most~$1$. In this case, the $G$-module structure of $\Bbbk [X]$ is completely determined by the weight monoid of~$X$ and the decomposition of $\Bbbk [X]$ into simple $G$-modules reads as
\begin{equation} \label{eqn_direct_sum}
\Bbbk[X] = \bigoplus \limits_{\lambda \in \Gamma_X} \Bbbk[X]_\lambda,
\end{equation}
where $\Bbbk[X]_\lambda$ stands for the simple $G$-submodule of $\Bbbk[X]$ with highest weight~$\lambda$.

According to a result of Vinberg and Kimelfeld~\cite{VK78}, an irreducible affine $G$-variety is multiplicity-free if and only if it contains an open $B$-orbit. Normal irreducible (not necessarily affine) $G$-varieties containing an open $B$-orbit are said to be \textit{spherical}. In particular, an irreducible affine $G$-variety is spherical if and only if it is multiplicity-free and normal. For a multiplicity-free affine $G$-variety~$X$, the property of being normal (and hence spherical) can be read off from its weight monoid: $X$ is normal if and only if $\Gamma_X$ is \textit{saturated}, that is, equals the intersection of a lattice with a cone.

Given any finitely generated monoid $\Gamma \subset \Lambda^+$, there exists a multiplicity-free affine $G$-variety $X_0$ with weight monoid~$\Gamma$ for which the decomposition~(\ref{eqn_direct_sum}) is a grading, that is, $\Bbbk[X_0]_\lambda \Bbbk[X_0]_\mu = \Bbbk[X_0]_{\lambda+ \mu}$ for all $\lambda, \mu \in \Gamma$. As shown by Popov in \cite{Po86},
the $G$-variety $X_0$ is a common $G$-equivariant degeneration of all multiplicity-free affine $G$-varieties with weight monoid~$\Gamma$.

For a general multiplicity-free affine $G$-variety~$X$, the deviation of the decomposition~(\ref{eqn_direct_sum}) from being a grading is measured by the \textit{tail cone} of~$X$. This is the rational convex cone spanned by all expressions $\lambda+\mu - \nu$ such that $\lambda, \mu, \nu \in \Gamma_X$ and $\Bbbk[X]_\lambda \Bbbk[X]_\mu \supset \Bbbk[X]_\nu$. An invariant of importance to us is the set of \textit{spherical roots} of~$X$, which by definition are the primitive elements of the lattice spanned by $\Gamma_X$ lying on extremal rays of the tail cone of $X$. It is known that each spherical root of~$X$ is an element of a finite set $\Sigma(G)$ depending only on the group~$G$; see \S\,\ref{subsec_SHS_classification} for a description of~$\Sigma(G)$.

Losev proved in~\cite{Lo09b} that, up to a $G$-equivariant isomorphism, every affine spherical $G$-variety is uniquely determined by its weight monoid along with its set of spherical roots.
In \cite{ACF15}, this result was recovered by a different method and extended to arbitrary multiplicity-free affine $G$-varieties. It is therefore a natural problem to classify all multiplicity-free affine $G$-varieties with a prescribed weight monoid by determining all sets arising as sets of spherical roots of such varieties.

In this paper, we solve the above-mentioned problem in the case of affine spherical $G$-varieties. More precisely, for any given finitely generated and saturated monoid $\Gamma\subset\nobreak \Lambda^+$, we determine all possible sets $\Sigma$ (we call them \textit{admissible}) such that there exists an affine spherical $G$-variety with weight monoid $\Gamma$ and set of spherical roots~$\Sigma$ (see Theorem~\ref{Theorem-AS}). Our description is derived from the combinatorial classification of (not necessarily affine) spherical $G$-varieties established jointly in~\cite{LV, Kn91, Lu01, Lo09a, BraP14, Cu}.\footnote{See also the references in~\cite{BraP14} for partial results.} It appears that admissible sets are characterized by a number of combinatorial conditions, which can be easily checked in every concrete example.

From our description of the affine spherical $G$-varieties with prescribed weight monoid~$\Gamma$,
we derive a combinatorial characterization of the irreducible components of the moduli scheme $\mathrm M_\Gamma$ for these varieties that was constructed by Alexeev and Brion in~\cite{AB}. According to loc.~cit., for every finitely generated monoid $\Gamma \subset \Lambda^+$ (not necessarily saturated), $\mathrm M_\Gamma$ is an affine scheme of finite type equipped with an action of the adjoint torus $T_\ad$ (the quotient of $T$ by the center of~$G$) in such a way that $T_\ad$-orbits in $\mathrm M_\Gamma$ are in bijection with $G$-isomorphism classes of multiplicity-free affine $G$-varieties with weight monoid~$\Gamma$. Moreover, the variety $X_0$ may be regarded as the unique $T_\ad$-fixed closed point of~$\mathrm M_\Gamma$. It was also proved in~\cite{AB} (and recovered in~\cite{ACF15}) that $\mathrm M_\Gamma$ contains only finitely many $T_\ad$-orbits, so that there are only finitely many multiplicity-free affine $G$-varieties with any given weight monoid.

When $\Gamma$ is saturated, we show that the irreducible components of the moduli scheme $\mathrm M_\Gamma$ bijectively correspond to maximal with respect to inclusion admissible sets for~$\Gamma$ (see Theorem~\ref{indexationofIrrComp}). In particular, $\mathrm M_\Gamma$ is irreducible if and only if there is an admissible set that contains all the others. As an application of this criterion, we find a number of sufficient conditions on $\Gamma$ for $\mathrm M_\Gamma$ to be irreducible, two of these conditions read as follows:
\begin{enumerate}
\item \label{G-sat}
$\Gamma$ is $G$-saturated, that is, equals the intersection of a lattice with $\Lambda^+$ (see Theorem~\ref{thm_G-saturated});

\item \label{factorial}
$\Gamma$ is the weight monoid of an affine spherical $G$-variety whose algebra of regular functions is a unique factorization domain (see Proposition~\ref{prop_factorial_case}).
\end{enumerate}

Based on our description of the irreducible components of the moduli scheme~$\mathrm M_\Gamma$ (for saturated~$\Gamma$), we construct several examples of monoids $\Gamma$ such that $\mathrm M_\Gamma$ is reducible (see~\S\,\ref{subsec_examples_reducible}). The only example of that kind known before is due to D.~Luna (unpublished) and was mentioned in~\cite[Example~3.20]{AB2}.

Finally, combining our irreducibility criterion with results on the tangent space of $\mathrm M_\Gamma$ at the point~$X_0$ obtained in~\cite{ACF15}, we give a necessary and sufficient combinatorial condition for $\mathrm M_\Gamma$ to be an affine space (as a scheme); see Theorem~\ref{thm_smoothness_criterion}. Thanks to this condition, $\mathrm M_\Gamma$ turns out to be an affine space in both cases (\ref{G-sat}) and~(\ref{factorial}) mentioned above. As a particular case of~(\ref{factorial}), $\mathrm M_\Gamma$ is an affine space whenever $\Gamma$ is the weight monoid of a spherical $G$-module; this result was first proved by Papadakis and Van Steirteghem in \cite{PvS12,PvS16} via a case-by-case approach based on the classification of spherical modules. As another application of the above-mentioned combinatorial condition, we exhibit examples of monoids $\Gamma$ for which $\mathrm M_\Gamma$ is a non-reduced point (see~\S\,\ref{subsec_examples_non-reduced}).

We note that in~\cite{BvS} Bravi and Van Steirteghem independently obtained a similar combinatorial description of affine spherical $G$-varieties with a prescribed weight monoid. Their method is essentially the same as ours, however they used a slightly different language. The fact that $\mathrm M_\Gamma$ is irreducible in situation~(\ref{factorial}) was announced (without proof) by Pezzini in~\cite{Pe17}.

\subsection*{Organization of the paper}
In \S\,\ref{sec_notation} we set up the notation and conventions used throughout this paper. In \S\S\,\ref{sec_MF_var},\,\ref{sec_M_Gamma}, and~\ref{sec_spherical_varieties} we collect some basic material and known results on multiplicity-free affine $G$-varieties, Alexeev and Brion's moduli schemes~$\mathrm M_\Gamma$, and spherical $G$-varieties.
In \S\,\ref{sec_ASV_description} we obtain our combinatorial description of all affine spherical $G$-varieties with a prescribed weight monoid in terms of admissible sets. In \S\,\ref{sec_applications} we apply the results of \S\,\ref{sec_ASV_description} to characterize combinatorially the irreducible components of moduli schemes $\mathrm M_\Gamma$ and, in particular, to obtain an irreducibility criterion for~$\mathrm M_\Gamma$. Besides, we provide several conditions on $\Gamma$ under which $\mathrm M_\Gamma$ turns out to be irreducible or even an affine space. We end up \S\,\ref{sec_applications} by discussing several examples of $\mathrm M_\Gamma$ illustrating the diverse geometric properties of these schemes.

\subsection*{Acknowledgments}

The authors are grateful to E.\,B.~Vinberg for having attracted their attention to the problem of describing all affine spherical $G$-varieties with a prescribed weight monoid. Thanks are also due to the two referees for their valuable comments and suggestions, which improved the paper.

The first author was supported by the DFG priority program 1388, the ``Oberwolfach Leibniz Fellows'' programme of the Mathematical Research Institute of Oberwolfach, Dmitry Zimin's ``Dynasty'' Foundation, the Guest Program of the Max-Planck Institute for Mathematics in Bonn, and the RFBR grant no.~16-01-00818. He also thanks the Institute for Fundamental Science in Moscow for providing excellent working conditions.

\section{Notation and conventions}
\label{sec_notation}

Throughout this paper, all topological terms relate to the Zariski topology. All subgroups of algebraic groups are assumed to be closed. A \textit{variety} is a separated reduced scheme of finite type. A $K$-variety is a variety equipped with a regular action of an algebraic group~$K$. A $K$-isomorphism of two $K$-varieties is a $K$-equivariant isomorphism. Closed subsets of schemes are always equipped with their reduced subscheme structure.

$\ZZ^+ = \lbrace z \in \ZZ \mid z \ge 0 \rbrace$;

$\QQ^+ = \lbrace q \in \QQ \mid q \ge 0 \rbrace$;

$\Bbbk^\times$ is the multiplicative group of the field~$\Bbbk$;

$|X|$ is the cardinality of a finite set~$X$;

$V^*$ is the dual of a vector space~$V$;

$L^* = \Hom_\ZZ(L, \ZZ)$ is the dual lattice of a lattice~$L$;

$L_\QQ = L \otimes_\ZZ \QQ$ is the rational vector space spanned by a lattice~$L$;

$L^*_\QQ = (L_\QQ)^* = \Hom_\ZZ(L, \QQ)$

$\langle \cdot\,,\,\cdot \rangle$ is the natural pairing between $L^*_\QQ$ and~$L$, where $L$ is a lattice;

$\mathfrak X(K)$ is the character group of an algebraic group~$K$ (in additive notation);

$\overline Y$ is the closure of a subset $Y$ of a scheme~$X$;

$\Bbbk[X]$ is the algebra of regular functions on an algebraic variety~$X$;

$\mathcal O_X$ is the structure sheaf of a scheme~$X$;

$T_x X$ is the tangent space of a scheme $X$ at a closed point $x \in X$;

$G$ is a connected reductive algebraic group;

$C$ is the connected center of~$G$;

$B \subset G$ is a fixed Borel subgroup;

$T \subset B$ is a fixed maximal torus;

$U \subset B$ is the unipotent radical of~$B$;

$T_\ad$ is the adjoint torus of~$T$, that is, the quotient of $T$ by the center of~$G$;

$(\cdot\,,\,\cdot)$ is a fixed inner product on~$\mathfrak X(T)_\QQ$ invariant with respect to the Weyl group associated with~$T$;

$\Delta \subset \mathfrak X(T)$ is the root system of $G$ with respect to~$T$;

$\Pi \subset \Delta$ is the set of simple roots with respect to~$B$;

$\Delta^\vee \subset \mathfrak X(T)^*$ is the root system dual to~$\Delta$;

$\alpha^\vee \in \Delta^\vee$ is the coroot corresponding to a root $\alpha \in \Delta$;

$\Lambda^+ \subset \mathfrak X(T)$ is the monoid of dominant weights with respect to~$B$;

$V(\lambda)$ is the simple $G$-module with highest weight $\lambda \in \Lambda^+$;

$v_\lambda \in V(\lambda)$ is a highest weight vector in~$V(\lambda)$.

The lattices $\mathfrak X(B)$ and $\mathfrak X(T)$ are identified via restricting characters from $B$ to~$T$.

If $V$ is a vector space equipped with an action of a group~$K$,
then the notation $V^K$ stands for the subspace of $K$-invariant
vectors. For every character $\chi$ of~$K$, the notation $V^{(K)}_\chi$ stands for the subspace of $K$-semi-invariant vectors of weight~$\chi$.

The nodes of connected Dynkin diagrams as well as the simple roots of simple algebraic groups are numbered as in~\cite{Bo}.

For every element $\sigma = \sum \limits_{\alpha \in \Pi}k_\alpha \alpha$, where $k_\alpha \in \QQ^+$ for all $\alpha \in \Pi$, we set $\Supp \sigma = \lbrace \alpha \mid \nobreak k_\alpha \ne \nobreak 0 \rbrace$. The \textit{type} of $\sigma$ is the type of the Dynkin diagram of the set $\Supp \sigma$. When the Dynkin diagram of $\Supp \sigma$ is connected, we denote the $i$th simple root in $\Supp \sigma$ by~$\alpha_i$.\footnote{If the Dynkin diagram of $\Supp \sigma$ has non-trivial symmetries, this convention may not determine~$\alpha_i$ uniquely, however this does not cause any ambiguity in our paper.}

If the group $G$ is simple then $\varpi_i$ stands for the $i$th fundamental weight of~$G$.

For every subset $F \subset \mathfrak X(T)$, we set $F^\perp = \lbrace \alpha \in \Pi \mid \langle \alpha^\vee, \lambda \rangle = 0 \text{ for all } \lambda \in F \rbrace$. By abuse of notation, for a single element $\lambda \in \mathfrak X(T)$ we write $\lambda^\perp$ instead of $\lbrace \lambda \rbrace^\perp$.

Let $Q$ be a finite-dimensional vector space over~$\QQ$.

A subset $\mathcal C \subset Q$ is called a (finitely generated convex) \textit{cone} if there are finitely many elements $q_1, \ldots, q_s \in Q$ such that $\mathcal C = \QQ^+ q_1 + \ldots + \QQ^+ q_s$.

A cone $\mathcal C \subset \mathcal Q$ is said to be \textit{strictly convex} if $\mathcal C \cap (- \mathcal C) = \lbrace 0 \rbrace$.

The \textit{dimension} of a cone is the dimension of its linear span.

The \textit{dual cone} of a cone $\mathcal C \subset Q$ is the cone
\[
\mathcal C^\vee = \lbrace \xi \in Q^* \mid \langle \xi, q \rangle \ge 0 \text{\;for all\;} q \in \mathcal C \rbrace.
\]
One always has $(\mathcal C^\vee)^\vee = \mathcal C$.

A \textit{face} of a cone $\mathcal C \subset Q$ is a subset $\mathcal F \subset \mathcal C$ of the form
\[
\mathcal F = \mathcal C \cap \lbrace q \in Q \mid \langle \xi, q \rangle = 0 \rbrace
\]
for some $\xi \in \mathcal C^\vee$. Each face of $\mathcal C$ is again a cone.

An \textit{extremal ray} of a strictly convex cone $\mathcal C$ is a face of dimension~$1$.


\section{Generalities on multiplicity-free affine \texorpdfstring{$G$}{G}-varieties}
\label{sec_MF_var}

As already stated in the introduction, we say that an affine $G$-variety is multiplicity-free if it is irreducible and
its algebra of regular functions  is a multiplicity-free $G$-module. An affine spherical $G$-variety is a multiplicity-free affine $G$-variety that is normal.

\subsection{Combinatorial invariants}
\label{subsec_MF_comb_inv}

Let $X$ be a multiplicity-free affine $G$-variety.

The \textit{weight monoid of $X$} is the set $\Gamma_X$ of highest weights of the $G$-module $\Bbbk [X]$. Since $X$ is irreducible, $\Gamma_X$ is a submonoid of $\Lambda^+$. Moreover, $\Gamma_X$ is finitely generated, which is implied by the fact that $X$ is affine; see, for instance,~\cite[Corollary~5 of Theorem~4]{Po86}.

A monoid $\Gamma \subset \Lambda^+$ is said to be \textit{saturated} if $\Gamma = \ZZ\Gamma \cap \QQ^+ \Gamma$. It is well-known (and follows essentially from~\cite[\S\,1.2, Theorem~1]{Vu76} and~\cite[Ch.~I, \S\,1, Lemma~1]{KKMS73}) that $X$ is normal (and hence spherical) if and only if the weight monoid $\Gamma_X$ is saturated.

For every $\lambda \in \Gamma_X$, let $\Bbbk[X]_\lambda \subset \Bbbk[X]$ be the simple $G$-submodule with highest weight~$\lambda$.
An expression $\lambda + \mu - \nu$ with $\lambda, \mu,\nu \in \Gamma_X$ is called a \textit{tail} of $X$ if $\Bbbk[X]_\lambda \Bbbk[X]_\mu \supset \Bbbk[X]_\nu$.
The \textit{tail cone} of $X$ is the convex cone in $\QQ \Gamma_X$ generated by all tails.

The \textit{root monoid} of~$X$ is the monoid $\Xi_X$ generated by all tails. Note that $\Xi_X$ is a submonoid in $\ZZ^+ \Pi$.
Let $\Xi^\sat_X$ be the saturation of $\Xi_X$, that is, the intersection of the group $\ZZ \Xi_X$ with the tail cone.
A fundamental property of the monoid $\Xi^\sat_X$ is given by the following theorem, which is a particular case of~\cite[Theorem~1.3]{Kn96}.

\begin{theorem}
\label{thm_saturation_is_free}
The monoid $\Xi_X^\sat$ is free, and its indecomposable elements form a set of simple roots of a root system in~$\mathfrak X(T)$.
\end{theorem}

It follows from this theorem that the tail cone of $X$ is simplicial.
Let $\overline \Sigma_X$ denote the set of indecomposable elements of~$\Xi_X^\sat$. Clearly, the set $\overline \Sigma_X$ is linearly independent and
\[
\Xi_X^\sat = \ZZ^+ \overline \Sigma_X.
\]

A \textit{spherical root} of $X$ is a primitive element of the lattice $\ZZ \Gamma_X$ lying on an extremal ray of the tail cone of~$X$. We denote the set of spherical roots of~$X$ by~$\Sigma_X$.

The definitions of the sets $\Sigma_X$ and $\overline \Sigma_X$ imply that for every $\sigma \in \Sigma_X$ there is a unique element $\overline \sigma \in \overline \Sigma_X$ which is a multiple of~$\sigma$. Then the map $\sigma \mapsto \overline \sigma$ is a natural bijection between the sets $\Sigma_X$ and~$\overline \Sigma_X$. See Theorem~\ref{thm_overline_Sigma} for an explicit description of this bijection for affine spherical $G$-varieties.

In what follows, we shall need the following consequence of Theorem~\ref{thm_saturation_is_free}.

\begin{corollary}\label{crl_sr_are_li}
The set $\Sigma_X$ is linearly independent.
\end{corollary}

A refined version of Theorem~\ref{thm_saturation_is_free} for affine spherical $G$-varieties is given by

\begin{theorem}[{\cite[Theorem~4.12]{ACF15}}]
\label{thm_root_monoid_is_free}
Suppose that $X$ is an affine spherical $G$-variety. Then $\Xi_X = \Xi_X^\sat$; in particular, $\Xi_X$ is free.
\end{theorem}

The following uniqueness result was first proved by Losev in case of affine spherical $G$-varieties; see~\cite[Theorem~1.2]{Lo09b}.

\begin{theorem}[{\cite[Corollary~4.23]{ACF15}}] \label{thm_uniqueness_MF}
Up to a $G$-equivariant isomorphism, a multiplicity-free affine $G$-variety $X$ is uniquely determined by the pair $(\Gamma_X, \Sigma_X)$.
\end{theorem}

\subsection{The variety \texorpdfstring{$X_0$}{X0}}
\label{subsec_X_0}

Given a finitely generated submonoid $\Gamma \subset \Lambda^+$, take a finite generating set $\mathrm E$ of $\Gamma$ and consider the $G$-module
\[
V = \bigoplus \limits_{\lambda \in \mathrm E}
V(\lambda)^*.
\]
For every $\lambda \in \mathrm E$, choose a highest weight vector $v_{\lambda^*} \in V(\lambda)^*$, consider the vector
\[
x_0 = \sum \limits_{\lambda \in \mathrm E} v_{\lambda^*} \in V,
\]
and put
\[
X_0 = \overline{Gx_0} \subset V.
\]

\begin{theorem}[{\cite[Theorem~6]{VP72}}]
\label{thm_VP}
The following assertions hold:
\begin{enumerate}[label=\textup{(\alph*)},ref=\textup{\alph*}]
\item
up to a $G$-isomorphism, the $G$-variety $X_0$ is independent of the choice of~$\mathrm E$;

\item
$X_0$ is a multiplicity-free affine $G$-variety;

\item
$\Gamma_{X_0} = \Gamma$;

\item \label{thm_VP_d}
$\Bbbk[X_0]_\lambda \cdot \Bbbk[X_0]_\mu = \Bbbk[X_0]_{\lambda + \mu}$ for all $\lambda, \mu \in \Gamma_{X_0}$; in other words, $\Sigma_{X_0} = \varnothing$.
\end{enumerate}
\end{theorem}

\subsection{Degenerations}
Let $X$ be a multiplicity-free affine $G$-variety. We say that an element $\varrho \in \mathfrak X(T)^*$ is \textit{non-negative} (with respect to~$X$) if $\langle \varrho, \gamma \rangle \ge 0$ for all $\gamma \in \Gamma_X \cup \Sigma_X$. Note that every element of $\mathfrak X(T)^*$ lying in the dominant Weyl chamber of the root system~$\Delta^\vee$ is non-negative.

Take a non-negative element $\varrho \in \mathfrak X(T)^*$ and
for every $n \in \ZZ^+$ define the subspace
\[
D_{\varrho, n} = \bigoplus \limits_{\substack{\lambda \in \Gamma_X, \\ \langle \varrho, \lambda \rangle \le n}} \Bbbk[X]_\lambda \subset \Bbbk[X].
\]
Then the collection of subspaces $\lbrace D_{\varrho, n} \mid n \in \ZZ^+ \rbrace$ forms a $G$-invariant filtration on~$\Bbbk[X]$. Let
\[
\gr_\varrho \Bbbk[X] = \bigoplus \limits_{n \in \ZZ^+} D_{\varrho, n} / D_{\varrho, n-1}
\]
be the graded algebra associated with this filtration. Clearly, the algebras $\Bbbk[X]^U$ and $(\gr_\varrho \Bbbk[X])^U$ are isomorphic, which by~\cite[Corollary of Theorem~6]{Po86} implies that the algebra $\gr_\varrho \Bbbk[X]$ is a finitely generated integral domain. We now consider the irreducible affine $G$-variety
\[
\gr_\varrho X = \Spec (\gr_\varrho \Bbbk[X]).
\]

The following result follows directly from the construction.

\begin{lemma} \label{lemma_contraction}
The affine $G$-variety $Y = \gr_\varrho X$ is multiplicity-free. Moreover, $\Gamma_Y =\nobreak \Gamma_X$ and $\Sigma_Y = \lbrace \sigma \in \Sigma_X \mid \langle \varrho, \sigma \rangle = 0 \rbrace$. In particular, $\Sigma_Y \subset \Sigma_X$.
\end{lemma}

Note that if $\langle \varrho, \alpha \rangle > 0$ for all $\alpha \in \Pi$ then $\gr_\varrho X$ is $G$-isomorphic to the $G$-variety $X_0$ introduced in~\S\,\ref{subsec_X_0}; see~\cite[\S\,4]{Po86}.

\begin{proposition} \label{prop_MF_subset}
Suppose that $X$ is a multiplicity-free affine $G$-variety and  $\Sigma \subset \Sigma_X$ is an arbitrary subset. Then
\begin{enumerate}[label=\textup{(\alph*)},ref=\textup{\alph*}]
\item \label{prop_MF_subset_a}
there exists a non-negative element $\varrho \in \mathfrak X(T)^*$ such that $\langle \varrho, \sigma \rangle = 0$ for all $\sigma \in \Sigma$ and $\langle \varrho, \sigma \rangle > 0$ for all $\sigma \in \Sigma_X \setminus \Sigma$;

\item \label{prop_MF_subset_b}
for any $\varrho$ as in \textup(\ref{prop_MF_subset_a}\textup), the $G$-variety $Y = \gr_\varrho X$ satisfies $\Gamma_Y = \Gamma_X$ and $\Sigma_Y = \Sigma$.
\end{enumerate}
\end{proposition}

\begin{proof}
(\ref{prop_MF_subset_a}) Recall from Theorem~\ref{thm_saturation_is_free} that the elements in $\overline \Sigma_X$ form a set of simple roots of a root system in~$\mathfrak X(T)$. For every $\sigma \in \Sigma_X$, let $\varpi(\sigma) \in \mathfrak X(T)^*_\QQ$ be the fundamental coweight of this root system corresponding to the simple root~$\overline \sigma$, so that $\langle \varpi(\sigma), \overline \sigma \rangle = 1$ and $\langle \varpi(\sigma), \overline\sigma' \rangle = 0$ for all $\sigma' \in \Sigma_X \setminus \lbrace \sigma \rbrace$. Since every fundamental coweight of a root system is a non-negative linear combination of simple coroots and $\overline \Sigma_X \subset \ZZ^+\Pi$, it follows that for every $\sigma \in \Sigma_X$ the element $\varpi(\sigma)$ lies in $\QQ^+\lbrace \alpha^\vee \mid \alpha \in \Pi \rbrace$ and hence satisfies $\langle \varpi(\sigma), \lambda \rangle \ge 0$ for all $\lambda \in \Lambda^+$. Consequently, a suitable positive multiple of the element $\sum \limits_{\sigma \in \Sigma_X \setminus \Sigma} \varpi(\sigma)$ belongs to $\mathfrak X(T)^*$ and is non-negative, hence it can be taken for~$\varrho$.

(\ref{prop_MF_subset_b}) This is a consequence of part~(\ref{prop_MF_subset_a}) and Lemma~\ref{lemma_contraction}.
\end{proof}


\section{Generalities on moduli schemes \texorpdfstring{$\mathrm M_\Gamma$}{MGamma}}
\label{sec_M_Gamma}

Throughout this section we assume that $\Gamma \subset \Lambda^+$ is an arbitrary finitely generated monoid.

\subsection{The definition of~\texorpdfstring{$\mathrm M_\Gamma$}{M_Gamma}}
\label{subsec_def_of_M_Gamma}
Consider the $G$-module
\begin{equation} \label{eqn_A_Gamma}
A_\Gamma = \bigoplus_{\lambda \in \Gamma} V(\lambda).
\end{equation}
Fix a highest weight vector $v_\lambda \in V(\lambda)$ and equip the subspace $A_\Gamma^U = \bigoplus \limits_{\lambda \in \Gamma} \Bbbk v_\lambda \subset A_\Gamma$ with an algebra structure by setting
\begin{equation} \label{eqn_multiplication}
v_\lambda \cdot v_\mu = v_{\lambda + \mu} \quad \text{for all $\lambda, \mu \in \Gamma$}.
\end{equation}
Note that the algebra $A_\Gamma^U$ is isomorphic to the semigroup algebra of~$\Gamma$.

Let
\[
\mathcal M_\Gamma \colon \text{(Schemes)} \to \text{(Sets)}
\]
be the contravariant functor assigning to each scheme $S$ the set of $\mathcal O_S$-$G$-algebra structures on the sheaf $\mathcal O_S \otimes_\Bbbk A_\Gamma$ that extend the given multiplication~(\ref{eqn_multiplication}) on~$A_\Gamma^U$.

As a consequence of \cite[Proposition~2.10 and Theorems~1.12,~2.7]{AB} (see also \cite[\S\,4.3]{Br13}), the functor $\mathcal M_\Gamma$ is represented by an affine scheme $\mathrm M_\Gamma$ of finite type, called the \textit{moduli scheme of multiplicity-free affine $G$-varieties with weight monoid~$\Gamma$}. In particular, the closed points of $\mathrm M_\Gamma$ are in bijection with the $G$-equivariant algebra structures on $A_\Gamma$ extending the multiplication~(\ref{eqn_multiplication}) on~$A_\Gamma^U$.

Thanks to \cite[Theorem~2]{Po86}, for every multiplicity-free affine $G$-variety $X$ with weight monoid~$\Gamma$ there is a (not necessarily unique) $T$-equivariant isomorphism
\begin{equation} \label{eqn_isomorphism}
\tau \colon \Bbbk[X]^U \xrightarrow{\sim} A_\Gamma^U,
\end{equation}
which uniquely extends to a $G$-module isomorphism $\Bbbk[X] \xrightarrow{\sim} A_\Gamma$. The algebra structure on $A_\Gamma$ transferred from $\Bbbk[X]$ via this isomorphism thus determines a closed point of~$\mathrm M_\Gamma$. In this way, $X$ may be regarded as a closed point of $\mathrm M_\Gamma$ (which depends however on the choice of~$\tau$).

\subsection{Basic facts on the \texorpdfstring{$T_\ad$}{T_ad}-action on~\texorpdfstring{$\mathrm M_\Gamma$}{M_Gamma}}

The moduli scheme $\mathrm M_\Gamma$ can be equipped with an action of the adjoint torus $T_\ad$; see~\cite[\S\,2.1]{AB} for a precise definition. For convenience of the reader, we recall this action on the level of closed points. As was mentioned in \S\,\ref{subsec_def_of_M_Gamma}, each closed point of $\mathrm M_\Gamma$ is given by a multiplication law $m \colon A_\Gamma \otimes A_\Gamma \to A_\Gamma$ extending the multiplication~(\ref{eqn_multiplication}) on~$A_\Gamma^U$. It is clear from~(\ref{eqn_A_Gamma}) that $m$ can be expressed as the sum
\[
m = \sum \limits_{\lambda,\mu,\nu \in \Gamma} m_{\lambda,\mu}^\nu
\]
where each component $m_{\lambda,\mu}^\nu \colon V(\lambda) \otimes V(\mu) \to V(\nu)$ is a $G$-module homomorphism. Then \cite[Proposition~2.11]{AB} asserts that
\[
(t \cdot m)_{\lambda, \mu}^\nu = (\nu - \lambda - \mu)(t) \cdot m_{\lambda,\mu}^\nu
\]
for all $t \in T_\ad$ and $\lambda, \mu, \nu \in \Gamma$. It is worth noting that $T_\ad$ acts on the closed points of $\mathrm M_\Gamma$ just by changing the isomorphism $\tau$ in~(\ref{eqn_isomorphism}).

Below we gather several properties of the $T_\ad$-action on~$\mathrm M_\Gamma$.

\begin{theorem}[{see \cite[Theorem 1.12 and Lemma 2.2]{AB}}]
\label{thm_bijection_isoclasses}
Let $X$ be a multiplicity-free affine $G$-variety with weight monoid~$\Gamma$. Regard $X$ as a closed point of $\mathrm M_\Gamma$ via an isomorphism~$\tau$ as in~\textup{(\ref{eqn_isomorphism})}.
\begin{enumerate}[label=\textup{(\alph*)},ref=\textup{\alph*}]
\item
The $T_\ad$-orbit $T_\ad X \subset \mathrm M_\Gamma$ does not depend on the choice of~$\tau$.

\item \label{thm_bijection_isoclasses_b}
The map $X \mapsto T_\ad X$ induces a bijection between the $G$-isomorphism classes of multiplicity-free affine $G$-varieties with weight monoid~$\Gamma$ and the $T_\ad$-orbits in~$\mathrm M_\Gamma$.
\end{enumerate}
\end{theorem}

Recall the definition of the variety $X_0$ from~\S\,\ref{subsec_X_0}.

\begin{theorem}[{\cite[Theorem~2.7]{AB}}] \label{thm_X_0_fixed_point}
Regarded as a closed point of~$\mathrm M_\Gamma$, $X_0$ is fixed by~$T_\ad$ and belongs to each $T_\ad$-orbit closure in~$\mathrm M_\Gamma$.
\end{theorem}

The following result was first proved in~\cite[Corollary~3.4]{AB} and recovered by another method in~\cite[Corollary~4.24]{ACF15}.

\begin{theorem} \label{thm_finiteness_result}
The torus $T_\ad$ acts on $\mathrm M_\Gamma$ with finitely many orbits. In particular, there are only finitely many isomorphism classes of multiplicity-free affine $G$-varieties with prescribed weight monoid~$\Gamma$.
\end{theorem}

\begin{corollary} \label{crl_irr_comp}
The irreducible components of $\mathrm M_\Gamma$ are given by the closures of the open $T_\ad$-orbits in $\mathrm M_\Gamma$.
\end{corollary}

The next theorem provides a moduli interpretation of the root monoid of a multiplicity-free affine $G$-variety.

\begin{theorem}[{\cite[Proposition~2.13]{AB}}]
\label{thm_orbit_closure_of_X}
Let $X$ be a multiplicity-free affine $G$-variety with weight monoid~$\Gamma$. The $T_\ad$-orbit closure $\overline{T_\ad X}\subset \mathrm M_\Gamma$ is a multiplicity-free affine $T_\ad$-variety with weight monoid~$\Xi_X$.
\end{theorem}

\begin{corollary} \label{crl_dimension_of_CX}
Under the hypotheses of Theorem~\textup{\ref{thm_orbit_closure_of_X}}, $\dim T_\ad X = |\Sigma_X|$.
\end{corollary}

\subsection{Further properties}

The three equivalent conditions of the following proposition naturally define a partial order on the set of ($G$-isomorphism classes of) multiplicity-free affine $G$-varieties with weight monoid~$\Gamma$.

\begin{proposition} \label{prop_partial_order}
Let $X$ and $Y$ be multiplicity-free affine $G$-varieties with weight monoid~$\Gamma$. The following conditions are equivalent.
\begin{enumerate}[label=\textup{(\arabic*)},ref=\textup{\arabic*}]
\item \label{prop_partial_order_1}
$\overline{T_\ad Y} \subset \overline{T_\ad X}$.

\item \label{prop_partial_order_2}
$\Sigma_Y \subset \Sigma_X$.

\item \label{prop_partial_order_3}
$Y = \gr_\varrho X$ for some non-negative element $\varrho \in \mathfrak X(T)^*$.
\end{enumerate}
\end{proposition}

\begin{proof}
(\ref{prop_partial_order_1})$\Rightarrow$(\ref{prop_partial_order_2}) In view of Theorem~\ref{thm_orbit_closure_of_X} and well-known facts from the theory of (possibly non-normal) affine toric varieties (see, for instance, \cite[Theorem 3.A.3]{CLS}), the relation $\overline{T_\ad Y} \subset \overline{T_\ad X}$ implies that the cone $\QQ^+ \Xi_Y$ is a face of the cone $\QQ^+ \Xi_X$, which yields $\Sigma_Y \subset \Sigma_X$.

(\ref{prop_partial_order_2})$\Rightarrow$(\ref{prop_partial_order_1}) As the set $\Sigma_X$ is linearly independent (Corollary~\ref{crl_sr_are_li}), the cone $\QQ^+ \Sigma_Y$ is a face of the cone $\QQ^+ \Sigma_X$. It then follows from loc.~cit. that $\overline{T_\ad X}$ contains a $T_\ad$-orbit $O$ such that the weight monoid of $\overline O$ equals $\Xi_X \cap \QQ^+ \Sigma_Y$. By Theorem~\ref{thm_bijection_isoclasses}, there exists a multiplicity-free affine $G$-variety $Y'$ with weight monoid $\Gamma$ such that $O = T_\ad Y'$. Then $\Sigma_{Y'} = \Sigma_Y$, therefore $Y$ and $Y'$ are $G$-isomorphic by Theorem~\ref{thm_uniqueness_MF}.

(\ref{prop_partial_order_2})$\Rightarrow$(\ref{prop_partial_order_3})
This follows from Proposition~\ref{prop_MF_subset} and Theorem~\ref{thm_uniqueness_MF}.

(\ref{prop_partial_order_3})$\Rightarrow$(\ref{prop_partial_order_2})
This follows from Lemma~\ref{lemma_contraction}.
\end{proof}

The following smoothness criterion for $\mathrm M_\Gamma$ is known to specialists; we provide it together with a proof for convenience of the reader.

\begin{theorem} \label{thm_smoothness}
The following properties are equivalent.
\begin{enumerate}[label=\textup{(\arabic*)},ref=\textup{\arabic*}]
\item \label{thm_smoothness_1}
$\mathrm M_\Gamma$ is smooth at $X_0$.

\item \label{thm_smoothness_2}
$\mathrm M_\Gamma$ is smooth.

\item \label{thm_smoothness_3}
$\mathrm M_\Gamma$ is an affine space.
\end{enumerate}
\end{theorem}

\begin{proof}
(\ref{thm_smoothness_1})$\Rightarrow$(\ref{thm_smoothness_2})
Clearly, the set of singular points in $\mathrm M_\Gamma$ is closed and $T_\ad$-stable. If it is nonempty then it contains $X_0$ by Theorem~\ref{thm_X_0_fixed_point},
a~contradiction.

(\ref{thm_smoothness_2})$\Rightarrow$(\ref{thm_smoothness_3})
It follows from Theorem~\ref{thm_orbit_closure_of_X} that $\mathrm M_\Gamma$ is a smooth affine multiplicity-free $T_\ad$-variety. By Theorem~\ref{thm_X_0_fixed_point}, the closed $T_\ad$-orbit in $\mathrm M_\Gamma$ is the point~$X_0$, whence $\mathrm M_\Gamma$ is an affine space.

(\ref{thm_smoothness_3})$\Rightarrow$(\ref{thm_smoothness_1})
This implication is obvious.
\end{proof}

\section{Generalities on spherical varieties}
\label{sec_spherical_varieties}

As was already mentioned in the introduction, a $G$-variety $X$ is said to be spherical if it is normal, irreducible, and contains an open $B$-orbit. Thanks to \cite[Theorem~2]{VK78}, in the case of affine $X$ this definition agrees with that given at the beginning of~\S\,\ref{sec_MF_var}.

\subsection{Combinatorial invariants}
\label{subsec_comb_inv}

Let $X$ be an arbitrary spherical $G$-variety and let $\Bbbk(X)$ denote the field of rational functions on~$X$.

The \textit{weight lattice} of~$X$ is the set
\[
\Lambda_X = \lbrace \lambda \in \mathfrak X(T) \mid
\Bbbk(X)^{(B)}_\lambda \ne 0 \rbrace.
\]
Clearly, $\Lambda_X$ is a sublattice of $\mathfrak X(T)$. For every $\lambda \in \Lambda_X$, we fix a nonzero function $f_\lambda \in \Bbbk(X)^{(B)}_\lambda$. Since $X$ contains an open $B$-orbit, one has $\Bbbk(X)^{(B)}_\lambda = \Bbbk f_\lambda$ for all $\lambda \in \Lambda_X$.

We set
\[
\mathcal L_X = \Lambda_X^* \quad\mbox{ and }\quad \mathcal Q_X = (\mathcal L_X)_\QQ = (\Lambda_X)^*_\QQ.
\]
We regard $\mathcal L_X$ as a sublattice in~$\mathcal Q_X$.

Every discrete $\QQ$-valued valuation $v$ of $\Bbbk(X)$ vanishing on~$\Bbbk^\times$ determines the element
$\rho_v \in \mathcal Q_X$ such that $ \langle \rho_v, \lambda
\rangle = v(f_\lambda)$ for all $\lambda \in \Lambda_X$. The
restriction of the map $v \mapsto \rho_v$ to the set of
$G$-invariant $\QQ$-valued valuations of $\Bbbk(X)$ vanishing
on~$\Bbbk^\times$ is injective (see~\cite[7.4]{LV}
or~\cite[Corollary~1.8]{Kn91}); we denote its image by~$\mathcal
V_X$. Moreover, $\mathcal V_X \subset \mathcal Q_X$ is a
finitely generated convex cone of full dimension; see~\cite[4.1, Corollary,~i)]{BriP}
or~\cite[Corollary~5.3]{Kn91}. The cone $\mathcal V_X$ is called
the \textit{valuation cone} of~$X$.

Primitive elements $\sigma \in \Lambda_X$ such that $\QQ^+ \sigma$ is an extremal ray of the cone $-\mathcal V_X^\vee$ are called \textit{spherical roots} of~$X$. We denote the set of all spherical roots of~$X$ by~$\Sigma_X$.

From~\cite[\S\,3]{Br90} or~\cite[Theorem~7.4]{Kn94}, we know that $\Sigma_X$ is a set of simple roots of a root system in $\Lambda_X$. Hence $(\sigma_1, \sigma_2) \le 0$ for any two distinct elements $\sigma_1, \sigma_2 \in \Sigma_X$. In particular, the set $\Sigma_X$ is linearly independent.

Let $\mathcal B_X$ (resp. $\mathcal D_X$) denote the set of all $G$-stable (resp. $B$-stable but not $G$-stable) prime divisors in~$X$. Elements of $\mathcal D_X$ are called \textit{colors} of~$X$. Clearly, the union $\mathcal B_X \cup \mathcal D_X$ is the set of all $B$-stable prime divisors in~$X$. As $X$ contains an open $B$-orbit, the set $\mathcal B_X \cup \mathcal D_X$ is finite.

For every $D \in \mathcal B_X \cup \mathcal D_X$, let $v_D$ be the valuation of the field $\Bbbk(X)$ defined by~$D$, that is, $v_D(f) = \ord_D(f)$ for every $f \in \Bbbk(X)$. We define the map
\[
\rho_X \colon \mathcal B_X \cup \mathcal D_X \to \mathcal L_X
\]
by setting $\rho_X(D) = \rho_{v_D}$.

For every $\alpha \in \Pi$, let $\mathcal D_X(\alpha) \subset \mathcal D_X$ be the set of colors that are unstable with respect to the action of the minimal parabolic subgroup $P_\alpha\supset B$ of $G$ associated with~$\alpha$. Then the set $\mathcal D_X$ is the union of the sets $\mathcal D_X(\alpha)$ with $\alpha$ running over $\Pi$. We set
\begin{equation} \label{eqn_type_(p)}
\Pi^p_X = \lbrace \alpha \in \Pi \mid \mathcal D_X(\alpha) =
\varnothing \rbrace.
\end{equation}

\begin{remark} \label{rem_invariants}
It follows from the above definitions that the invariants $\Lambda_X$, $\mathcal L_X$, $\mathcal Q_X$, $\mathcal V_X$, $\Sigma_X$, $\mathcal D_X$, $\left.\rho_X\right|_{\mathcal D_X}$, and $\Pi^p_X$ depend only on the open $G$-orbit $O \subset X$.
The sets $\mathcal D_X$ and $\mathcal D_O$ are identified by intersecting colors of~$X$ with~$O$.
\end{remark}

\begin{remark}
If $X$ is affine then by~\cite[Lemma~5.1]{Kn91} the dual cone of $- \mathcal V_X$ is exactly the tail cone of~$X$ defined in~\S\,\ref{subsec_MF_comb_inv}. Taking into account Proposition~\ref{prop_weight_monoid}(\ref{prop_weight_monoid_b}) below, we see that in this case the set $\Sigma_X$ is exactly the set of primitive elements of $\Lambda_X$ lying on extremal rays of the tail cone of~$X$, which agrees with the definition of~$\Sigma_X$ given in~\S\,\ref{subsec_MF_comb_inv}.
\end{remark}

The following proposition, which is known to specialists, relates some of the above invariants of an affine spherical $G$-variety $X$ with its weight monoid $\Gamma_X$ introduced in~\S\,\ref{subsec_MF_comb_inv}.

\begin{proposition} \label{prop_weight_monoid}
Let $X$ be an affine spherical $G$-variety and let $\mathcal K_X$ be the cone in $\mathcal Q_X$ generated by the set $\rho_X(\mathcal B_X \cup \mathcal D_X)$.
\begin{enumerate}[label=\textup{(\alph*)},ref=\textup{\alph*}]
\item \label{prop_weight_monoid_a}
$\Gamma_X = \Lambda_X \cap \mathcal K_X^\vee$, where $\mathcal K_X^\vee$ is considered as a cone in $(\Lambda_X)_\QQ$.

\item \label{prop_weight_monoid_b}
$\Lambda_X = \ZZ \Gamma_X$.

\item \label{prop_weight_monoid_c}
$\Pi^p_X = \Gamma_X^\perp$.
\end{enumerate}
\end{proposition}

\begin{proof}
(\ref{prop_weight_monoid_a}) Let $\lambda \in \Lambda_X$. Since the function $f_\lambda$ is $B$-semi-invariant, it can have poles only in the complement of the open $B$-orbit in~$X$. The normality of $X$ implies that $f_\lambda$ is regular on~$X$ if and only if it has no poles along each of the divisors in $\mathcal B_X \cup \mathcal D_X$, which is equivalent to $\lambda \in \mathcal K^\vee_X$.

(\ref{prop_weight_monoid_b}) See, for instance, \cite[Proposition~5.14]{Tim}.

(\ref{prop_weight_monoid_c}) To prove the inclusion ``$\subset$'', let $\alpha \in \Pi^p_X$ and assume that $\langle \alpha^\vee, \lambda \rangle > 0$ for some $\lambda \in \Gamma_X$. Then the line $\Bbbk f_\lambda = \Bbbk[X]^{(B)}_\lambda$ is $P_\alpha$-unstable, hence so is the divisor of zeros of~$f_\lambda$ by~\cite[Theorem~3.1]{VP}. Consequently, $\mathcal D_X(\alpha) \ne \varnothing$, which contradicts~(\ref{eqn_type_(p)}).

Now let us prove the inclusion ``$\supset$''. Since $X$ is affine, there exists a nonzero $B$-semi-invariant function $f \in \Bbbk[X]$ that vanishes on all colors of~$X$. Without loss of generality we may assume that $f = f_\lambda$ for some $\lambda \in \Gamma_X$. If $\alpha \in \Pi \setminus \Pi^p_X$ then $\mathcal D_X(\alpha) \ne \varnothing$ by~(\ref{eqn_type_(p)}), hence the line $\Bbbk f_\lambda$ is $P_\alpha$-unstable and $\langle \alpha^\vee, \lambda \rangle > 0$.
\end{proof}

\subsection{Relations between simple roots and colors}

The results in this subsection are extracted from {\cite[\S\S\,2.7,~3.4]{Lu97}; see also~\cite[\S\,30.10]{Tim}}.

Let $X$ be a spherical $G$-variety.

\begin{proposition}
\label{prop_alternative}
For every $\alpha \in \Pi$, exactly one of the following possibilities is realized:
\begin{enumerate}
\renewcommand{\labelenumi}{\textup($p$\textup)}
\renewcommand{\theenumi}{$p$}

\item \label{pp}
$\mathcal D_X(\alpha) = \varnothing$.

\renewcommand{\labelenumi}{\textup($a$\textup)}
\renewcommand{\theenumi}{$a$}

\item \label{aa}
$\alpha \in \Sigma_X$, $\mathcal D_X(\alpha) = \lbrace D^+, D^- \rbrace$, and $\langle \rho_X(D^+), \lambda \rangle + \langle \rho_X(D^-), \lambda \rangle = \langle \alpha^\vee, \lambda \rangle$ for all $\lambda \in \Lambda_X$.

\renewcommand{\labelenumi}{\textup($a'$\textup)}
\renewcommand{\theenumi}{$a'$}

\item \label{aa'}
$2\alpha \in \Sigma_X$, $\mathcal D_X(\alpha) = \lbrace D \rbrace$, and $\langle \rho_X(D), \lambda \rangle = \langle \frac12 \alpha^\vee, \lambda \rangle$ for all $\lambda \in \Lambda_X$.

\renewcommand{\labelenumi}{\textup($b$\textup)}
\renewcommand{\theenumi}{$b$}

\item \label{bb}
$\QQ \alpha \cap \Sigma_X = \varnothing$, $\mathcal D_X(\alpha) = \lbrace D \rbrace$, and $\langle \rho_X(D), \lambda \rangle = \langle \alpha^\vee, \lambda \rangle$ for all $\lambda \in \Lambda_X$.
\end{enumerate}
\end{proposition}

In what follows, by $\mathcal D^a_X$ (resp. $\mathcal D^{a'}_X$, $\mathcal D^b_X$) we denote the union of the sets $\mathcal D_X(\alpha)$ where $\alpha$ runs over all simple roots of type~(\ref{aa}) (resp.~(\ref{aa'}),~(\ref{bb})).

\begin{proposition}
\label{prop_disjoint_union}
The union $\mathcal D_X = \mathcal D_X^a \cup \mathcal D_X^{a'} \cup \mathcal D_X^b$ is disjoint.
\end{proposition}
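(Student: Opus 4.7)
The plan is to reduce the proposition to the statement that any two distinct simple roots $\alpha, \beta \in \Pi$ with $\mathcal D_X(\alpha) \cap \mathcal D_X(\beta) \ne \varnothing$ have the same type in the alternative of Proposition~\ref{prop_alternative}. Granted this, the partition of non-$(p)$ simple roots into types $(a)$, $(a')$, $(b)$ immediately yields the disjointness of $\mathcal D^a$, $\mathcal D^{a'}$, $\mathcal D^b$, since each color $D$ then receives a well-defined type from any single simple root $\alpha$ with $D \in \mathcal D_X(\alpha)$.

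I would argue the reduced claim by contradiction. Fix a color $D$ together with distinct simple roots $\alpha, \beta$ of types $\tau \ne \tau'$ in $\{a, a', b\}$ such that $D \in \mathcal D_X(\alpha) \cap \mathcal D_X(\beta)$, and combine the two descriptions of $\langle \rho_X(D), \cdot \rangle$ on $\Lambda_X$ supplied by Proposition~\ref{prop_alternative}. This yields, in each of the three mixed cases, a linear relation on $\Lambda_X$ between $\alpha^\vee$, $\beta^\vee$, and (when $\tau$ or $\tau'$ equals $a$) the element $\rho_X(D')$, where $D'$ is the second color of the corresponding $\mathcal D_X(\cdot)$. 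The type assumption simultaneously prescribes which among $\alpha, 2\alpha, \beta, 2\beta$ lie in $\Sigma_X$ and which do not.

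To extract a contradiction, the plan is to evaluate the resulting identities on spherical roots drawn from $\{\alpha, 2\alpha, \beta, 2\beta\} \cap \Sigma_X \subset \Lambda_X$, exploiting the root-system structure of $\Sigma_X$ recalled in Subsection~\ref{subsec_invariants} (in particular the linear independence of $\Sigma_X$ and the non-positivity $(\sigma_1, \sigma_2) \le 0$ for distinct spherical roots) together with the numerical constraints on $\rho_X(D)$ built into the defining formulas of each type. The main obstacle is the case $\{\tau, \tau'\} = \{a, b\}$: here $\beta \notin \Sigma_X$, so $\beta$ itself cannot serve as a test weight, and one must exploit the splitting $\mathcal D_X(\alpha) = \{D^+, D^-\}$ and track the sign behavior of $\rho_X(D^-)$ in tandem with that of $\rho_X(D)$ in order to rule out both subcases $D = D^+$ and $D = D^-$.
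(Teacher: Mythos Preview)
The paper does not supply its own proof of this proposition; it is quoted from \cite{Lu97} and \cite{Tim} without argument. So there is no paper's proof to compare against, and what follows concerns only the internal soundness of your sketch.

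Your reduction is correct, and the case $\{\tau,\tau'\}=\{a',b\}$ goes through as you indicate: the two formulas force $\tfrac12\iota(\alpha^\vee)=\iota(\beta^\vee)$ on $\Lambda_X$, and evaluating at $2\alpha\in\Sigma_X$ gives $2$ on the left and $2\langle\beta^\vee,\alpha\rangle\le 0$ on the right.

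The gap is in the two cases involving type~$(a)$. For $\alpha$ of type~$(a)$, Proposition~\ref{prop_alternative} constrains only the \emph{sum} $\rho_X(D^+)+\rho_X(D^-)=\iota(\alpha^\vee)$ on $\Lambda_X$; it says nothing about the individual pairings $\langle\rho_X(D^\pm),\alpha\rangle$. Thus in the mixed case $(a)$ versus $(b)$, knowing that $\langle\rho_X(D),\alpha\rangle=\langle\beta^\vee,\alpha\rangle\le 0$ while $\langle\rho_X(D^+),\alpha\rangle+\langle\rho_X(D^-),\alpha\rangle=2$ yields no contradiction: nothing in your list of inputs rules out $\langle\rho_X(D),\alpha\rangle\le 0$ together with $\langle\rho_X(D'),\alpha\rangle\ge 2$ for the other color~$D'$. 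The same obstruction arises in the $(a)$ versus $(a')$ case. The missing ingredient is the fact that $\langle\rho_X(D^\pm),\alpha\rangle=1$ individually (essentially axiom~(A1) of Definition~\ref{def_HSD} for the geometric data). In Luna's treatment this comes from the local $\SL_2$-analysis of the $P_\alpha$-action on the open $B$-orbit---genuine geometric input beyond the numerical identities recorded in Proposition~\ref{prop_alternative}, and not something your ``sign tracking'' of $\rho_X(D^-)$ can recover.
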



\subsection{Classification of spherical homogeneous spaces}
\label{subsec_SHS_classification}

\begin{definition} \label{def_sr_of_G}
An element $\sigma \in \mathfrak X(T)$ is called a \textit{spherical root of~$G$} if $\sigma$ is a non-negative linear combination of simple roots of $G$ with coefficients in $\frac12 \ZZ$ such that the following conditions are satisfied:
\begin{enumerate}
\item
if $\sigma \in \ZZ\Delta$ then $\sigma$ appears in Table~\ref{table_spherical_roots};

\item
if $\sigma \notin \ZZ\Delta$ then $2\sigma$ appears in Table~\ref{table_spherical_roots} and its number is marked by an asterisk.
\end{enumerate}
\end{definition}
We denote the set of all spherical roots of $G$ by~$\Sigma(G)$.

\begin{table}[h]

\caption{Spherical roots} \label{table_spherical_roots}

\begin{tabular}{|c|c|c|c|c|}
\hline

No. & \begin{tabular}{c} Type of \\ $\Supp \sigma$\end{tabular} &
$\sigma$ & $\Pi_\sigma$ & Note\\

\hline

$1$ & $\mathsf A_1$ & $\alpha_1$ & $\varnothing$ & \\

\hline

$2$ & $\mathsf A_1$ & $2\alpha_1$ & $\varnothing$ & \\

\hline

$3\lefteqn{^*}$ & $\mathsf A_1 \times \mathsf A_1$ & $\alpha +
\beta$ & $\varnothing$ & \\

\hline

$4$ & $\mathsf A_r$ & $\alpha_1 + \alpha_2 + \ldots + \alpha_r$ &
\begin{tabular}{c}
$\varnothing$ for $r = 2$; \\
\hline $\alpha_2, \alpha_3, \ldots, \alpha_{r-1}$ \\ for $r \ge 3$
\end{tabular}
& $r \ge 2$\\

\hline

$5\lefteqn{^*}$ & $\mathsf A_3$ & $\alpha_1 + 2\alpha_2 + \alpha_3$
& $\alpha_1, \alpha_3$ & \\

\hline

$6$ & $\mathsf B_r$ & $\alpha_1 + \alpha_2 + \ldots + \alpha_r$ &
\begin{tabular}{c}
$\varnothing$ for $r = 2$; \\
\hline $\alpha_2, \alpha_3, \ldots, \alpha_{r-1}$ \\ for $r \ge 3$
\end{tabular}
& $r \ge 2$\\

\hline

$7$ & $\mathsf B_r$ & $2\alpha_1 + 2\alpha_2 + \ldots +
2\alpha_r$ & $\alpha_2, \alpha_3, \ldots, \alpha_r$ & $r \ge 2$\\

\hline

$8\lefteqn{^*}$ & $\mathsf B_3$ & $\alpha_1 + 2\alpha_2 + 3\alpha_3$
& $\alpha_1, \alpha_2$ & \\

\hline

$9$ & $\mathsf C_r$ & $\alpha_1 + 2\alpha_2 + 2\alpha_3 + \ldots +
2\alpha_{r-1} + \alpha_r$ & $\alpha_3, \alpha_4, \ldots,
\alpha_r$ & $r \ge 3$\\

\hline

$10\lefteqn{^*}$ & $\mathsf D_r$ & $2\alpha_1 + 2\alpha_2 + \ldots +
2\alpha_{r-2} + \alpha_{r-1} + \alpha_r$ & $\alpha_2, \alpha_3,
\ldots, \alpha_r$ & $r \ge 4$ \\

\hline

$11$ & $\mathsf F_4$ & $\alpha_1 + 2\alpha_2 + 3\alpha_3 +
2\alpha_4$ & $\alpha_1, \alpha_2, \alpha_3$ & \\

\hline

$12$ & $\mathsf G_2$ & $\alpha_1 + \alpha_2$ & $\varnothing$ & \\

\hline

$13$ & $\mathsf G_2$ & $2\alpha_1 + \alpha_2$ & $\alpha_2$ & \\

\hline

$14$ & $\mathsf G_2$ & $4\alpha_1 + 2\alpha_2$ & $\alpha_2$ & \\

\hline

\end{tabular}
\end{table}

In Table~\ref{table_spherical_roots}, the notation $\alpha_i$ stands for the $i$th simple root of the set $\Supp \sigma$ whenever the Dynkin diagram of $\Supp \sigma$ is connected. If $\Supp \sigma$ is of type $\mathsf A_1 \times \mathsf A_1$, then $\alpha, \beta$ are the two distinct roots in $\Supp \sigma$.

\begin{remark}
Usually, a spherical root of $G$ is (equivalently) defined as an element $\sigma \in \mathfrak X(T)$ such that there exists a spherical $G$-variety $X$ with $\Lambda_X = \ZZ\sigma$ and $\Sigma_X = \lbrace \sigma \rbrace$. In this paper we adopt Definition~\ref{def_sr_of_G} because it is purely combinatorial and hence practical.
\end{remark}

A pair $(\Pi^p, \sigma)$ with $\Pi^p \subset \Pi$ and $\sigma \in \Sigma(G)$ is said to be \textit{compatible} if
\begin{equation} \label{eqn_Pi_p}
\Pi_\sigma \subset \Pi^p \subset \sigma^\perp
\end{equation}
where the set $\Pi_\sigma \subset \Supp \sigma$ is determined as follows:
\[
\Pi_\sigma =
\begin{cases}
\Supp \sigma \cap \sigma^\perp \setminus \lbrace \alpha_r \rbrace &
\text{if } \sigma = \alpha_1 + \alpha_2 + \ldots + \alpha_r
\text{ with support of type } \mathsf B_r; \\
\Supp \sigma \cap \sigma^\perp \setminus \lbrace \alpha_1 \rbrace
& \text{if } \sigma \text{ has support of type } \mathsf C_r; \\
\Supp \sigma \cap \sigma^\perp & \text{otherwise}.
\end{cases}
\]

For the reader's convenience, in the column ``$\Pi_\sigma$'' of Table~\ref{table_spherical_roots} we listed all roots in the set $\Pi_\sigma$ for every spherical root~$\sigma \in \ZZ \Delta$. If $\sigma \in \Sigma(G) \setminus \ZZ \Delta$, then
$\Pi_\sigma = \Pi_{2\sigma}$.

The following definition is due to Luna; see~\cite[\S\,2]{Lu01}.
Our version of this definition is close to~\cite[Definition~30.21]{Tim}.

\begin{definition} \label{def_HSD}
Suppose that $\Lambda$ is a sublattice in~$\mathfrak X(T)$, $\Pi^p$ is a subset of~$\Pi$, $\Sigma \subset \Sigma(G) \cap \Lambda$ is a set consisting of primitive elements in~$\Lambda$, and $\mathcal D^a$ is a finite set equipped with a map $\rho \colon \mathcal D^a \to \Lambda^*$. For every $\alpha \in \Pi \cap \Sigma$, put $\mathcal D(\alpha) = \lbrace D \in \mathcal D^a \mid \langle \rho(D), \alpha \rangle = 1 \rbrace$.

The quadruple $(\Lambda, \Pi^p, \Sigma, \mathcal D^a)$ is called a \textit{homogeneous spherical datum} if it satisfies the following axioms:
\begin{enumerate}
\renewcommand{\labelenumi}{(A\arabic{enumi})}
\renewcommand{\theenumi}{A\arabic{enumi}}

\item \label{A1}
$\langle \rho(D), \sigma \rangle \le 1$ for all $D \in \mathcal D^a$ and $\sigma \in \Sigma$, and the equality is attained if and only if $\sigma = \alpha \in \Pi \cap \Sigma$ and $D \in \mathcal D(\alpha)$;

\item \label{A2}
for every $\alpha \in \Pi \cap \Sigma$, the set $\mathcal D(\alpha)$ contains exactly two elements $D_\alpha^+$ and $D_\alpha^-$, which satisfy $\langle \rho(D_\alpha^+), \lambda \rangle + \langle \rho(D_\alpha^-), \lambda \rangle = \langle \alpha^\vee, \lambda \rangle$ for all $\lambda \in \Lambda$;

\item \label{A3}
the set $\mathcal D^a$ is the union of the sets $\mathcal D(\alpha)$ over all $\alpha \in \Pi \cap \Sigma$;

\setcounter{enumi}{0}
\renewcommand{\labelenumi}{($\Sigma$\arabic{enumi})}
\renewcommand{\theenumi}{$\Sigma$\arabic{enumi}}

\item \label{Sigma1}
if $\alpha \in \Pi \cap \frac12 \Sigma$ then $\langle \alpha^\vee, \lambda \rangle \in 2\ZZ$ for all $\lambda \in \Lambda$;

\item \label{Sigma2}
if $\alpha, \beta \in \Pi$, $\alpha \perp \beta$, and $\alpha + \beta \in \Sigma \cup 2\Sigma$, then $\langle \alpha^\vee, \lambda \rangle = \langle \beta^\vee, \lambda \rangle$ for all $\lambda \in \Lambda$;

\renewcommand{\labelenumi}{(S)}
\renewcommand{\theenumi}{S}

\item
$\Pi^p \subset \Lambda^\perp$ and for every $\sigma \in \Sigma$ the pair $(\Pi^p, \sigma)$ is compatible.
\end{enumerate}
\end{definition}

\begin{theorem}[\cite{Lu01, BraP14, Cu}] \label{thm_bijection_SHS}
The map $O \mapsto (\Lambda_O, \Pi^p_O, \Sigma_O, \mathcal D^a_O)$ is a bijection between \textup($G$-isomorphism classes of\textup) spherical homogeneous spaces of~$G$ and homogeneous spherical data for~$G$.
\end{theorem}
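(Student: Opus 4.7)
The plan is to prove the statement in two stages: first show that the map $G/H \mapsto (\Lambda_{G/H}, \Pi^p_{G/H}, \Sigma_{G/H}, \mathcal{D}^a_{G/H})$ is well defined, i.e.\ the quadruple attached to any spherical $G/H$ satisfies the axioms of Definition~\ref{def_HSD}; then show this map is both injective and surjective. Well-definedness is essentially a compilation: axioms $(\mathrm A1)$--$(\mathrm A3)$ follow from Proposition~\ref{prop_alternative} together with Proposition~\ref{prop_disjoint_union}, while $(\Sigma 1)$, $(\Sigma 2)$ and $(\mathrm S)$ follow from the fact that $\Sigma_{G/H}$ is the set of simple roots of a root system inside $\Lambda_{G/H}$, combined with Propositions~\ref{prop_exactly_two}--\ref{prop_property_of_sr}.

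For injectivity I would first reduce to the case where $H$ equals its spherical closure $\overline{H}$, so that $G/H$ is wonderful. In this case the datum $(\Pi^p, \Sigma, \mathcal{D}^a)$ prescribes the geometry of the unique wonderful compactification of $G/\overline{H}$; the crucial input is Losev's uniqueness theorem, showing that a wonderful $G$-variety is determined up to $G$-equivariant isomorphism by its spherical system. Once $\overline{H}$ is recovered, the group $H$ is obtained as the kernel of an appropriate quotient of the character group of $\overline{H}$, since $\overline{H}/H$ is diagonalizable with character lattice $\Lambda_{G/\overline{H}}/\Lambda_{G/H}$.

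The main obstacle is surjectivity: given an abstract homogeneous spherical datum, construct a $G/H$ realizing it. The strategy, due to Luna, is to reduce combinatorially to the case of \emph{primitive} spherical systems by means of localization at colors and parabolic induction, and then to construct a wonderful model directly for each primitive datum. Many primitive wonderful varieties can be realized explicitly as $G$-orbit closures in projectivized simple $G$-modules, in the spirit of the construction of $X_0$ in Subsection~\ref{subsection-notation-X0}. The remaining primitive cases are handled by invariant Hilbert scheme techniques, realizing a wonderful variety as a point of a suitable $\mathcal{M}_\Gamma$ with the prescribed valuation cone and color map. This case-by-case verification at the primitive level, carried out progressively in~\cite{Lu01}, \cite{BraP}, and \cite{Cu}, is the genuinely hard part; the check that the two constructions are inverse to each other is then routine from Remark~\ref{rem_invariants} and the theory recalled in Subsection~\ref{subsec_invariants}.
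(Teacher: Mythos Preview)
The paper does not contain a proof of this theorem: it is stated with a citation to \cite{Lu01,BraP,Cu} and used as a black box. There is therefore nothing in the paper to compare your argument against; the authors simply import the classification of spherical homogeneous spaces from those references.

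That said, your sketch is a reasonable summary of how the result is actually established in the literature, with a couple of caveats. For well-definedness, axioms $(\Sigma1)$, $(\Sigma2)$ and $(\mathrm S)$ do not follow from Propositions~\ref{prop_exactly_two}--\ref{prop_property_of_sr} as you claim; those propositions concern the combinatorics of individual elements of $\overline\Sigma_G$ (which simple roots can be subtracted, inclusions among the $\Pi^{pp}$-sets, etc.) and say nothing about the parity condition $(\Sigma1)$ or the symmetry condition $(\Sigma2)$. These axioms are verified in the literature by direct analysis of rank-one wonderful varieties and the behaviour of colors under the minimal parabolics. For injectivity, your reduction to the spherically closed case and appeal to the uniqueness of wonderful varieties is correct in spirit, but note that the uniqueness theorem you invoke is Losev's \cite{Lo09a}, which is \emph{not} among the references attached to the statement in the paper; the injectivity portion has a somewhat tangled history and was only fully settled by Losev's work. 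Your account of surjectivity---Luna's reduction to primitive spherical systems and the case-by-case realization completed in \cite{BraP} and \cite{Cu}---is accurate.
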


According to this theorem, the quadruple $(\Lambda_O, \Pi^p_O, \Sigma_O, \mathcal D^a_O)$ is said to be the \textit{homogeneous spherical datum} of~$O$, we shall denote it by $\mathscr H_O$.

\subsection{Affine embeddings of spherical homogeneous spaces}

Let $O$ be a spherical homogeneous space of~$G$. A spherical $G$-variety $X$ containing $O$ as an open $G$-orbit is said to be a \textit{$G$-equivariant embedding} (or simply an \textit{embedding}) of~$O$.

\begin{definition}
An embedding $X$ of $O$ is said to be \textit{simple} if $X$
contains exactly one closed $G$-orbit.
\end{definition}

Simple embeddings are classified by strictly convex colored cones.

\begin{definition}[{see \cite[\S\,3]{Kn91}}]
A \textit{colored cone} is a pair $(\mathcal C, \mathcal F)$ with $\mathcal C \subset \mathcal Q_O$ and $\mathcal F \subset \mathcal D_O$ having the following properties:
\begin{enumerate}[label=\textup{(CC\arabic*)},ref=\textup{CC\arabic*}]
\item
$\mathcal C$ is a cone generated by $\rho_O(\mathcal F)$ and finitely many elements of~$\mathcal V_O$;

\item
$\mathcal C^\circ \cap \mathcal V_O \ne \varnothing$.
\end{enumerate}

\noindent A colored cone is said to be \textit{strictly convex} if the following property holds:
\begin{enumerate}[label=\textup{(SCC)},ref=\textup{SCC}]
\item
$\mathcal C$ is strictly convex and $0 \notin \rho_O(\mathcal F)$.
\end{enumerate}
\end{definition}

Let $X$ be a simple embedding of $O$ and let $Y$ be the closed $G$-orbit of $X$. We put $\mathcal F_X = \lbrace D \in \mathcal D_X \mid Y \subset D \rbrace$ and let $\mathcal C_X$ denote the cone in $\mathcal Q_X$ generated by the set $\rho_X(\mathcal B_X \cup \mathcal F_X)$.

\begin{proposition}[{\cite[Theorem~3.1]{Kn91}}]
\label{prop_simple_embeddings} The map $X \mapsto (\mathcal C_X, \mathcal F_X)$ is a bijection between $G$-isomorphism classes of simple embeddings of\, $O$ and strictly convex colored cones in~$\mathcal Q_O$.
\end{proposition}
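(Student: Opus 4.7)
The plan is to associate to each simple embedding $X$ of $O$ its canonical $B$-chart $X_{B,Y}$ (the unique $B$-stable affine open neighborhood of the closed $G$-orbit $Y$, equal to the complement of the $B$-stable prime divisors not containing $Y$) and to characterize $X_{B,Y}$ combinatorially. Since any $G$-orbit in~$X$ has $Y$ in its closure, every $G$-stable prime divisor of $X$ contains $Y$, so $X \setminus X_{B,Y}$ is the union of the colors in $\mathcal D_X \setminus \mathcal F_X$. The three steps of the proof are as follows.

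\emph{Well-definedness.} The inclusion $\rho_X(\mathcal B_X) \subset \mathcal V_O$ gives (CC1). Condition (CC2) follows from the existence of a $G$-invariant valuation centered on $Y$; its class must lie in the relative interior of $\mathcal C_X$ since $Y$ is the unique closed $G$-orbit. For (SCC), both the strict convexity of $\mathcal C_X$ and the fact that $0 \notin \rho_X(\mathcal F_X)$ are consequences of $Y$ being a proper closed subset: a color with $\rho_X$-image zero, or a nonzero linear functional vanishing on all of $\mathcal C_X$, would yield an invertible $B$-semi-invariant on $X_{B,Y}$ vanishing somewhere on $Y$, a contradiction.

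\emph{Injectivity.} Since $X_{B,Y}$ is normal and its $B$-stable prime divisors are exactly $\mathcal B_X \cup \mathcal F_X$, one has
\[
\Bbbk[X_{B,Y}] \;=\; \bigcap_{D \in \mathcal B_X \cup \mathcal F_X} \mathcal O_{X,D} \cap \Bbbk(O),
\]
so the monoid of $B$-weights of $\Bbbk[X_{B,Y}]$ equals $\Lambda_O \cap \mathcal C_X^\vee$. Hence $X_{B,Y}$ is recovered as a $B$-subvariety of $\Bbbk(O)$ from the pair $(\mathcal C_X, \mathcal F_X)$, and $X = G \cdot X_{B,Y}$ is determined as a $G$-variety up to $G$-isomorphism.

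\emph{Surjectivity.} Given a strictly convex colored cone $(\mathcal C, \mathcal F)$, construct $X_{B,Y}$ as the spectrum of the subalgebra of $\Bbbk(O)$ whose $B$-weight monoid is $\Lambda_O \cap \mathcal C^\vee$; finite generation of this algebra follows from (SCC), which makes $\mathcal C^\vee$ full-dimensional and polyhedral. Produce $X$ by $G$-saturation, via the Luna--Vust procedure of gluing $G$-translates of $X_{B,Y}$, and verify that the outcome is a simple embedding of $O$ whose colored cone equals $(\mathcal C, \mathcal F)$. The main obstacle is precisely this last step: one has to check that the assembled $X$ is normal, that it has exactly one closed $G$-orbit, and that $\mathcal F$ is realized as the set of colors containing this orbit. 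The axiom $0 \notin \rho_O(\mathcal F)$ in (SCC) is exactly what ensures the latter identification, while $\mathcal C^\circ \cap \mathcal V_O \ne \varnothing$ in (CC2) is what guarantees the existence of the closed $G$-orbit.
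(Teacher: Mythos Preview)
The paper does not prove this proposition: it is quoted from \cite[Theorem~3.1]{Kn91} with no argument supplied, so there is no proof in the paper to compare against. Your outline follows the standard Luna--Vust strategy that Knop uses, and the three-step structure (well-definedness, injectivity via the canonical $B$-chart, surjectivity via gluing) is the right one.

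That said, there is a real imprecision in your injectivity and surjectivity steps. Knowing the monoid of $B$-weights $\Lambda_O \cap \mathcal C^\vee$ does \emph{not} determine the algebra $\Bbbk[X_{B,Y}]$: this algebra contains many functions that are not $B$-eigenvectors, and ``the subalgebra of $\Bbbk(O)$ whose $B$-weight monoid is $\Lambda_O \cap \mathcal C^\vee$'' is not a well-defined construction. The correct statement (this is essentially Knop's Lemma~2.4 and Theorem~2.5) is that
\[
\Bbbk[X_{B,Y}] \;=\; \bigcap_{v \in \mathcal V_O \cap \mathcal C_X} \mathcal O_v \;\cap\; \bigcap_{D \in \mathcal F_X} \mathcal O_{v_D},
\]
an intersection over \emph{all} $G$-invariant valuations in the cone, not just the finitely many coming from~$\mathcal B_X$. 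This is what depends only on the pair $(\mathcal C_X, \mathcal F_X)$ and gives both the injectivity and the actual construction of the $B$-chart for surjectivity. Your displayed formula $\bigcap_{D \in \mathcal B_X \cup \mathcal F_X} \mathcal O_{X,D}$ is correct for a given~$X$, but reconstructing $\mathcal B_X$ from the cone alone is exactly the nontrivial point you are skipping.
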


The following theorem provides a description of all affine
embeddings of~$O$.

\begin{theorem}[{\cite[Theorem~6.7]{Kn91}}]
\label{thm_affinity_criterion} Let $X$ be an embedding of~$O$.
\begin{enumerate}[label=\textup{(\alph*)},ref=\textup{\alph*}]

\item
If $X$ is affine then $X$ is simple.

\item
Suppose that $X$ is simple and let $(\mathcal C, \mathcal F)$ be the corresponding colored cone. Then $X$ is affine if and only if there is an element $\chi \in \Lambda_X$ such that:

\begin{enumerate}[label=\textup{(AE\arabic*)},ref=\textup{AE\arabic*}]

\item \label{AE1}
$\langle v, \chi \rangle \le 0$ for all $v \in \mathcal V_O$;

\item
$\langle q, \chi \rangle = 0$ for all $q \in \mathcal C$;
\label{AE2}

\item
$\langle \rho_O(D), \chi \rangle > 0$ for all $D \in \mathcal D_O \setminus \mathcal F$. \label{AE3}
\end{enumerate}
\end{enumerate}
\end{theorem}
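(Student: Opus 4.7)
For part~\textup{(a)}, I would argue by a GIT-style separation of closed orbits. Since $X$ is spherical, $B$ acts with an open orbit, forcing $\Bbbk(X)^B=\Bbbk$; hence $\Bbbk[X]^G\subseteq\Bbbk(X)^G\subseteq\Bbbk(X)^B=\Bbbk$. When $X$ is affine with $G$ reductive, distinct closed $G$-orbits are separated by $G$-invariant regular functions, so they inject into $\Spec\Bbbk[X]^G=\Spec\Bbbk$. Thus $X$ has exactly one closed $G$-orbit, namely it is simple.

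For part~\textup{(b)}, the key is to reinterpret \textup{(AE1)--(AE3)} geometrically in terms of the divisor of a $B$-eigenfunction. I would first observe that simplicity of $X$ forces every $G$-stable prime divisor of $X$ to contain the unique closed orbit $Y$: by Luna--Vust theory, $Y$ lies in the closure of every $G$-orbit of a simple embedding. Consequently, the $B$-stable prime divisors of $X$ containing $Y$ are exactly $\mathcal{B}_X\cup\mathcal{F}_X$, while those disjoint from $Y$ are precisely the colors $\mathcal{D}_O\setminus\mathcal{F}_X$. For any $\chi\in\Lambda_X$, the divisor of $f_\chi$ equals $\sum_D\langle\rho_X(D),\chi\rangle\,D$; the three conditions translate respectively into: $f_\chi$ extends regularly across every $G$-invariant valuation (AE1), its divisor has zero coefficient on every $B$-stable prime divisor through $Y$ (AE2), and has strictly positive coefficient on every $B$-stable prime divisor disjoint from $Y$ (AE3). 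Thus the existence of $\chi$ is equivalent to the existence of a $B$-eigenfunction $f_\chi\in\Bbbk[X]$ whose vanishing locus is exactly the union $Z=\bigcup_{D\in\mathcal{D}_O\setminus\mathcal{F}_X}D$.

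For the reverse direction, given such a $\chi$, the complement $X\setminus Z=X_{f_\chi}$ is a $B$-stable affine open neighborhood of $Y$. To conclude $X$ is affine, I would embed $X$ into a projective simple completion $\bar X$ (constructed by extending the colored cone) and view $f_\chi$ as a $B$-semi-invariant rational section of a $G$-linearized line bundle $\mathcal{L}$ on $\bar X$; conditions (AE1)--(AE3) identify $X$ with the $\chi$-semistable locus in $\bar X$ via a standard GIT argument, yielding $X\cong\Spec\Bbbk[X_{f_\chi}]$, which is affine. For the forward direction, given $X$ affine, one constructs $\chi$ by producing a $B$-semi-invariant regular function whose divisor is a positive multiple of $\sum_{D\in\mathcal{D}_O\setminus\mathcal{F}_X}n_DD$. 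Non-negativity on $\mathcal{V}_O$ (AE1) is automatic, since every $G$-invariant $\QQ$-valuation of $\Bbbk(X)$ is non-negative on $\Bbbk[X]$.

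The main obstacle is the construction of $\chi$ in the forward direction: one must show that some positive integer combination of the colors outside $\mathcal{F}_X$ is a principal $B$-equivariant Cartier divisor. This depends on Brion's description of the $B$-equivariant Picard group of a simple spherical variety via its combinatorial invariants, together with the fact that affineness of $X$ forces the relevant class group to be trivial modulo characters of $G$. Once the single well-chosen $\chi$ is produced, verifying the axioms (AE1)--(AE3) is the mechanical translation outlined above.
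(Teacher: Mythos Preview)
The paper does not give its own proof of this theorem: it is simply quoted from Knop's paper \cite{Kn91} and used as a black box. So there is no in-paper argument to compare your proposal against.

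That said, let me comment on the proposal itself. Your argument for part~(a) is correct and is the standard one. For part~(b), the geometric translation of (AE2) and (AE3) in terms of the divisor of $f_\chi$ is right, and the strategy for the reverse implication (produce a $B$-stable affine neighborhood of the closed orbit, then upgrade to affineness of $X$) is reasonable.

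However, your handling of (AE1) in the forward direction is wrong. You write that ``non-negativity on $\mathcal V_O$ (AE1) is automatic, since every $G$-invariant $\QQ$-valuation of $\Bbbk(X)$ is non-negative on $\Bbbk[X]$.'' There are two problems. First, (AE1) demands $\langle v,\chi\rangle = v(f_\chi)\le 0$, not $\ge 0$, so regularity of $f_\chi$ pushes in the \emph{opposite} direction. Second, an arbitrary $G$-invariant valuation $v\in\mathcal V_O$ need not have a center on $X$; only those lying in $\mathcal C$ do, and for those (AE2) already forces $\langle v,\chi\rangle=0$. The genuine content of (AE1) is a condition on the part of $\mathcal V_O$ \emph{outside} $\mathcal C$, and this does not fall out of regularity. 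In Knop's argument the forward direction proceeds by a different route: one shows that for $X$ affine the complement of the open $B$-orbit is the support of an effective $B$-stable Cartier divisor, and then uses the structure of $\Bbbk[X]$ as a multiplicity-free $G$-module to control the weight $\chi$ against all of $\mathcal V_O$, not merely the valuations centered on~$X$. Your sketch does not address this, and the ``main obstacle'' you identify (producing the divisor class) is actually the easier half.
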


Here is a useful application of the above theorem.

\begin{proposition}[{compare with~\cite[Corollary~15.5]{Tim}}] \label{prop_AC_appl}
Let $\mathcal K \subset \mathcal Q$ be a strictly convex cone generated by $\rho_O(\mathcal D_O)$ and finitely many elements of\,~$\mathcal V_O$. Suppose that $0 \notin \rho_O(\mathcal D_O)$. Then there exists an affine embedding~$X$ of~$O$ such that $\Gamma_X = \Lambda_O \cap \mathcal K^\vee$, where $\mathcal K^\vee$ is considered as a cone in~$(\Lambda_O)_\QQ$.
\end{proposition}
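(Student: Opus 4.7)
The natural approach is to realize $X$ as the simple spherical embedding of $O$ associated with the colored cone $(\mathcal C, \mathcal F) = (\mathcal K, \mathcal D_O)$ via Proposition~\ref{prop_simple_embeddings}, and then to use Theorem~\ref{thm_affinity_criterion}(b) to check that this embedding is affine.

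The first task is to verify that $(\mathcal K, \mathcal D_O)$ is a strictly convex colored cone. Axiom (CC1) is precisely the hypothesis imposed on $\mathcal K$. The condition (SCC) follows from the strict convexity of $\mathcal K$ together with the assumption $0 \notin \rho_O(\mathcal D_O)$. The delicate point is axiom (CC2), namely $\mathcal K^\circ \cap \mathcal V_O \ne \varnothing$. To establish it I would use that the valuation cone $\mathcal V_O$ is cosimplicial and full-dimensional in $\mathcal Q_O$, and that $\mathcal K$ contains the subcone generated by the given elements $v_1,\ldots,v_k \in \mathcal V_O$. If necessary, the $\mathcal V_O$-part of this generating set can be enlarged (keeping $\mathcal K$ unchanged) so as to supply an interior point of $\mathcal K$ lying inside $\mathcal V_O$.

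Granted the colored-cone axioms, Proposition~\ref{prop_simple_embeddings} produces a simple embedding $X$ of $O$ with $(\mathcal C_X, \mathcal F_X) = (\mathcal K, \mathcal D_O)$. To see that $X$ is affine, I would apply Theorem~\ref{thm_affinity_criterion}(b) with $\chi = 0$: axioms (AE1) and (AE2) are trivial for $\chi=0$, and (AE3) is vacuous because $\mathcal D_O \setminus \mathcal F = \varnothing$ by our choice of $\mathcal F$.

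Finally, to identify $\Gamma_X = \Lambda_O \cap \mathcal K^\vee$, I would use the standard description of $\Bbbk[X]$ as the subring of $\Bbbk(O)$ consisting of functions with non-negative valuations along every $B$-stable prime divisor of $X$. These prime divisors consist of the colors $\mathcal D_O$ (extended to $X$) together with the $G$-stable prime divisors corresponding to the extremal rays of $\mathcal K$ lying in $\mathcal V_O$; the cone generated by $\rho_X$ of all of them is exactly $\mathcal K$. Hence $f_\lambda \in \Bbbk[X]$ if and only if $\lambda \in \Lambda_O \cap \mathcal K^\vee$, giving the desired monoid. The main obstacle is the verification of (CC2); the remaining steps are essentially formal applications of the colored-cone/affinity machinery.
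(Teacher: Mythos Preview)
Your plan has a genuine gap at the ``delicate point'' you yourself flag: axiom~(CC2) can fail for the pair $(\mathcal K,\mathcal D_O)$. Nothing in the hypotheses forces $\mathcal K^\circ\cap\mathcal V_O\ne\varnothing$. For a concrete obstruction, take $O=\SL_2/T$. Then $\Lambda_O=\ZZ\alpha$, $\mathcal Q_O\cong\QQ$, $\mathcal V_O=\QQ_{\le 0}$, and the two colors both map to~$1$ under $\rho_O$. The cone $\mathcal K=\QQ_{\ge 0}$ is generated by $\rho_O(\mathcal D_O)$ alone (so the hypotheses of the proposition are met with no $\mathcal V_O$-generators), yet $\mathcal K^\circ=\QQ_{>0}$ is disjoint from $\mathcal V_O$. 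Hence $(\mathcal K,\mathcal D_O)$ is \emph{not} a colored cone, and your construction does not get off the ground. Your proposed fix, enlarging the $\mathcal V_O$-part of the generating set ``keeping $\mathcal K$ unchanged'', is circular: you can only adjoin elements that already lie in $\mathcal K$, so if $\mathcal K^\circ\cap\mathcal V_O=\varnothing$ to begin with, no such enlargement will produce an interior point in~$\mathcal V_O$.

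The paper avoids this by not taking $\mathcal C=\mathcal K$. Instead it sets $\mathcal C$ to be the \emph{largest face} of $\mathcal K$ whose relative interior meets $\mathcal V_O$ (such a face always exists, as $\{0\}$ qualifies), and then $\mathcal F=\{D\in\mathcal D_O\mid\rho_O(D)\in\mathcal C\}$. With this choice (CC2) holds by construction, (CC1) and (SCC) follow from the hypotheses, and the resulting simple embedding $X$ is affine via Theorem~\ref{thm_affinity_criterion}(b). In the $\SL_2/T$ example one gets $\mathcal C=\{0\}$, $\mathcal F=\varnothing$, and $X=O$ itself, which is indeed affine with $\Gamma_X=\ZZ_{\ge 0}\alpha=\Lambda_O\cap\mathcal K^\vee$. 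Note that the weight-monoid computation still comes out to $\Lambda_O\cap\mathcal K^\vee$ (not $\Lambda_O\cap\mathcal C^\vee$): all colors survive in $X$, and the extremal rays of $\mathcal K$ coming from $\mathcal V_O$ lie in $\mathcal C$ and are accounted for by $G$-stable divisors, so the cone $\mathcal K_X$ generated by $\rho_X(\mathcal D_X\cup\mathcal B_X)$ is exactly~$\mathcal K$.
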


\begin{proof}
Let $\mathcal C$ be the largest face of $\mathcal K$ such that $\mathcal C^\circ \cap \mathcal V_O \ne \varnothing$ and set
\[
\mathcal F = \lbrace D \in \mathcal D_O \mid \rho_O(D) \in \mathcal
C \rbrace.
\]
Then $(\mathcal C, \mathcal F)$ is a colored cone, and the simple embedding $X$ of $O$ corresponding to $(\mathcal C, \mathcal F)$ has the desired properties.
\end{proof}

\section{Affine spherical \texorpdfstring{$G$}{G}-varieties with a prescribed weight monoid}
\label{sec_ASV_description}

\subsection{Spherical roots compatible with a lattice}

Let $\Lambda \subset \mathfrak X(T)$ be a sublattice.

\begin{definition} \label{def_SR_comp_with_lattice}
A spherical root $\sigma \in \Sigma(G)$ is said to be
\textit{compatible with $\Lambda$} if the following properties hold:

\begin{enumerate}[label=\textup{(CL\arabic*)},ref=\textup{CL\arabic*}]
\item \label{CL1}
$\sigma \in \Lambda$ and $\sigma$ is a primitive element of~$\Lambda$;

\item \label{CL2}
the pair $(\Lambda^\perp, \sigma)$ is compatible;

\item \label{CL3}
if $\sigma = \alpha + \beta$ or $\sigma = \frac12(\alpha + \beta)$ for some $\alpha, \beta \in \Pi$ with $\alpha \perp \beta$, then $\langle \alpha^\vee, \lambda \rangle = \langle \beta^\vee, \lambda \rangle$ for all $\lambda \in \Lambda$;

\item \label{CL4}
if $\sigma = 2\alpha$ for some $\alpha \in \Pi$, then $\langle \alpha^\vee, \lambda \rangle \in 2\ZZ$ for all $\lambda \in \Lambda$.
\end{enumerate}
\end{definition}

A geometrical interpretation of this definition is given by

\begin{proposition} \label{prop_comp_with_lattice}
For a spherical root $\sigma \in \Sigma(G)$, the following
conditions are equivalent.

\begin{enumerate}[label=\textup{(\arabic*)},ref=\textup{\arabic*}]
\item \label{comp_with_Lambda1}
$\sigma$ is compatible with $\Lambda$.

\item \label{comp_with_Lambda2}
There exists a spherical homogeneous space $G / H$ with $\Lambda_{G / H} = \Lambda$ and $\Sigma_{G / H} = \lbrace \sigma \rbrace$.
\end{enumerate}
\end{proposition}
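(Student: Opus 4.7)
My plan is to translate the equivalence through the classification of spherical homogeneous spaces (Theorem~\ref{thm_bijection_SHS}): condition~(\ref{comp_with_Lambda2}) is equivalent to exhibiting a homogeneous spherical datum of the form $(\Lambda, \Pi^p, \{\sigma\}, \mathcal D^a)$ for some $\Pi^p \subset \Pi$ and some decorated finite set $(\mathcal D^a, \rho)$. The argument will then amount to matching, axiom by axiom, the conditions~(\ref{CL1})--(\ref{CL4}) of Definition~\ref{def_SR_comp_with_lattice} with the axioms of Definition~\ref{def_HSD} in the special case $|\Sigma|=1$.

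For the implication (\ref{comp_with_Lambda2})~$\Rightarrow$~(\ref{comp_with_Lambda1}), I would start from the spherical datum $\mathscr H_{G/H} = (\Lambda, \Pi^p_{G/H}, \{\sigma\}, \mathcal D^a_{G/H})$. Condition~(\ref{CL1}) is built into the definition of such a datum. For~(\ref{CL2}), axiom~(S) yields $\Pi^p_{G/H} \subset \Lambda^\perp = \Pi^p(\Lambda)$ together with $\Pi^{pp}(\sigma) \subset \Pi^p_{G/H}$, hence $\Pi^{pp}(\sigma) \subset \Pi^p(\Lambda)$; the reverse inclusion $\Pi^p(\Lambda) \subset \Pi^p(\sigma)$ follows immediately from $\sigma \in \Lambda$, since $\alpha \in \Lambda^\perp$ forces $\langle \alpha^\vee, \sigma \rangle = 0$. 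Conditions~(\ref{CL3}) and~(\ref{CL4}) are nothing but axioms~($\Sigma$2) and~($\Sigma$1), respectively.

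For the harder direction (\ref{comp_with_Lambda1})~$\Rightarrow$~(\ref{comp_with_Lambda2}), I would build a candidate homogeneous spherical datum with $\Pi^p := \Pi^p(\Lambda)$ and then invoke Theorem~\ref{thm_bijection_SHS}. The set $\mathcal D^a$ will be defined as follows. If $\sigma \notin \Pi$, set $\mathcal D^a := \varnothing$; axioms~(A1)--(A3) are then vacuous. If instead $\sigma = \alpha \in \Pi$, take $\mathcal D^a := \{D^+, D^-\}$ and pick any $\rho(D^+) \in \Hom_\ZZ(\Lambda, \ZZ)$ with $\rho(D^+)(\alpha) = 1$, setting $\rho(D^-) := \alpha^\vee|_\Lambda - \rho(D^+)$. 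Since $\langle \alpha^\vee, \alpha \rangle = 2$, this yields $\langle \rho(D^\pm), \alpha \rangle = 1$, so axioms~(A1)--(A3) are fulfilled with $\mathcal D(\alpha) = \mathcal D^a$. Axiom~(S) holds by the very definition of $\Pi^p(\Lambda)$, combined with~(\ref{CL2}), while~($\Sigma$1) and~($\Sigma$2) reduce to~(\ref{CL4}) and~(\ref{CL3}).

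The only step that is not purely formal is the existence of $\rho(D^+) \in \Hom_\ZZ(\Lambda, \ZZ)$ with $\rho(D^+)(\alpha) = 1$ in the case $\sigma = \alpha \in \Pi$; this relies on $\alpha$ being a primitive element of the lattice~$\Lambda$, which is precisely part of~(\ref{CL1}). Beyond that, I expect the proof to be essentially a dictionary between Definitions~\ref{def_SR_comp_with_lattice} and~\ref{def_HSD} in the degenerate setting $|\Sigma| = 1$, and I do not anticipate any substantial obstacle.
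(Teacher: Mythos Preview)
Your proposal is correct and follows essentially the same approach as the paper: you invoke Theorem~\ref{thm_bijection_SHS} and match the axioms of Definition~\ref{def_HSD} with conditions~(\ref{CL1})--(\ref{CL4}), constructing $\mathcal D^a = \varnothing$ when $\sigma \notin \Pi$ and $\mathcal D^a = \{D^+, D^-\}$ with $\rho(D^-) = \iota(\alpha^\vee) - \rho(D^+)$ when $\sigma = \alpha \in \Pi$, exactly as the paper does. Your treatment of the implication (\ref{comp_with_Lambda2})~$\Rightarrow$~(\ref{comp_with_Lambda1}) is in fact more explicit than the paper's, which simply says ``This follows from Definitions~\ref{def_HSD} and~\ref{def_SR_comp_with_lattice}.''
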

\begin{proof}
(\ref{comp_with_Lambda1})$\Rightarrow$(\ref{comp_with_Lambda2})
According to Theorem~\ref{thm_bijection_SHS}, it suffices to find a set $\mathcal D^a$ equipped with a map $\rho \colon \mathcal D^a \to \Lambda^*$ such that $\mathscr H = (\Lambda, \Lambda^\perp, \lbrace \sigma \rbrace, \mathcal D^a)$ is a homogeneous spherical datum. If $\sigma \notin \Pi$, then we take $\mathcal D^a = \varnothing$. In case $\sigma = \alpha \in \Pi$, we take $\mathcal D^a$ to be a set consisting of two elements $D^+$ and $D^-$ such that $\rho(D^+)$ is any element in $\Lambda^*$ with $\langle \rho(D^+), \alpha \rangle = 1$ and $\langle \rho(D^-), \lambda \rangle = \langle \alpha^\vee, \lambda \rangle - \langle \rho(D^+), \lambda \rangle$ for all $\lambda \in \Lambda$. In both cases, one easily checks that $\mathscr H$ is a homogeneous spherical datum.

(\ref{comp_with_Lambda2})$\Rightarrow$(\ref{comp_with_Lambda1})
Thanks to Theorem~\ref{thm_bijection_SHS}, this follows by comparing Definitions~\ref{def_HSD} and~\ref{def_SR_comp_with_lattice}.
\end{proof}


\subsection{Spherical roots compatible with a monoid}
\label{subsec_comp_with_monoid}

Let $\Gamma \subset \Lambda^+$ be a finitely generated and saturated monoid. Set $\mathcal L = (\ZZ \Gamma)^*$, $\mathcal Q = \mathcal L_\QQ = (\QQ \Gamma)^*$ and let $\iota \colon \mathfrak X(T)^* \to \mathcal L$ be the restriction map. Further, let $\mathcal K \subset \mathcal Q$ be the cone dual to $\QQ^+ \Gamma$. Clearly, $\mathcal K$ is strictly convex. Let $\mathcal K^1$ be the set of primitive elements $\varrho$ in $\mathcal L$ such that $\QQ^+ \varrho$ is an extremal ray of~$\mathcal K$. Finally, for every $\sigma \in \ZZ \Gamma$ we put $\mathcal K^1(\sigma) = \lbrace \varrho \in \mathcal K^1 \mid \langle \varrho, \sigma \rangle > 0 \rbrace$.

\begin{definition} \label{def_SR_comp_with_monoid}
A spherical root $\sigma \in \Sigma(G)$ is said to be \textit{compatible with $\Gamma$} if $\sigma$ is compatible with the lattice $\ZZ \Gamma$ and satisfies the following conditions:

\begin{enumerate}[label=\textup{(CM\arabic*)},ref=\textup{CM\arabic*}]
\item \label{CM1}
if $\sigma \notin \Pi$ then for every $\varrho \in \mathcal K^1(\sigma)$ there exists $\delta \in \Pi \setminus \Gamma^\perp$ such that $\iota(\delta^\vee)$ is a positive multiple of~$\varrho$.

\item \label{CM2}
if $\sigma = \alpha \in \Pi$ then there exist two elements $\varrho_1, \varrho_2 \in \mathcal K \cap \mathcal L$ with the following properties:
\begin{enumerate}[label=\textup{(\alph*)},ref=\textup{\alph*}]
\item
$\langle \varrho_1, \alpha \rangle = \langle \varrho_2, \alpha \rangle = 1$;

\item
$\iota(\alpha^\vee) = \varrho_1 + \varrho_2$;

\item
$\mathcal K^1(\alpha) \subset \lbrace \varrho_1, \varrho_2 \rbrace$.
\end{enumerate}
\end{enumerate}
The set of all spherical roots $\sigma \in \Sigma(G)$ compatible with~$\Gamma$ will be denoted by $\Sigma(\Gamma)$.
\end{definition}

\begin{remark} \label{rem_CM2}
It follows from condition (\ref{CM2}) that, for every $\alpha \in \Sigma(\Gamma) \cap \Pi$, at least one of the two elements $\varrho_1, \varrho_2$ lies on an extremal ray of the cone~$\mathcal K$. The latter implies that the two elements $\varrho_1, \varrho_2$ are uniquely determined, up to a permutation.
\end{remark}

With every $\alpha \in \Sigma(\Gamma) \cap \Pi$ we associate a two-element set $\mathcal D(\alpha) = \lbrace D_\alpha^+, D_\alpha^- \rbrace$ equipped with the map $\rho \colon \mathcal D(\alpha) \to \mathcal L$ given by $\rho(D_\alpha^+) = \varrho_1$ and $\rho(D_\alpha^-) = \varrho_2$.

The following proposition provides a geometrical interpretation of spherical roots compatible with~$\Gamma$.

\begin{proposition} \label{prop_comp_with_monoid}
For a spherical root $\sigma \in \Sigma(G)$, the following
conditions are equivalent.

\begin{enumerate}[label=\textup{(\arabic*)},ref=\textup{\arabic*}]
\item \label{comp_with_Gamma1}
$\sigma \in \Sigma(\Gamma)$.

\item \label{comp_with_Gamma2}
There exists an affine spherical $G$-variety $X$ with $\Gamma_X = \Gamma$ and $\Sigma_X = \lbrace \sigma \rbrace$.
\end{enumerate}
\end{proposition}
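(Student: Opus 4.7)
The plan is to deduce each direction by translating between the combinatorial conditions defining compatibility with $\Gamma$ and the colored cone data attached to an affine spherical embedding, using the classification of spherical homogeneous spaces (Theorem~\ref{thm_bijection_SHS}), the description of the weight monoid in terms of colors and $G$-stable divisors (Proposition~\ref{prop_weight_monoid}), and the affinity criterion (Proposition~\ref{prop_AC_appl}).

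For (\ref{comp_with_Gamma2})~$\Rightarrow$~(\ref{comp_with_Gamma1}), my first step is to pass to the open $G$-orbit $O = G/H$ of $X$. By Remark~\ref{rem_invariants}, the invariants $\Lambda_O$, $\Pi^p_O$, $\Sigma_O$ and $\rho_O |_{\mathcal D_O}$ match those of $X$, so in particular $\Lambda_O = \Lambda_X = \ZZ\Gamma = \Lambda$ (Proposition~\ref{prop_Gamma_gen_Lambda}) and $\Sigma_O = \{\sigma\}$. Proposition~\ref{prop_comp_with_lattice} then yields compatibility of $\sigma$ with $\Lambda$, that is (\ref{CL1})--(\ref{CL4}). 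Next I would use Proposition~\ref{prop_weight_monoid} to identify $\mathcal K$ with $\mathcal K_X$, which by definition is generated by $\rho_X(\mathcal D_X \cup \mathcal B_X)$; since $G$-stable divisors give $G$-invariant valuations, $\rho_X(\mathcal B_X) \subset \mathcal V_X$, and $\mathcal V_X = \mathcal V_\sigma$ because $\Sigma_X = \{\sigma\}$. The last step is to invoke Proposition~\ref{prop_alternative} to read off $\rho_X(\mathcal D_X)$: when $\sigma \notin \Pi$, only types ($p$), ($a'$), ($b$) occur, each $\gamma \in \Pi \setminus \Pi^p$ contributing a single color on the ray $\QQ^+\iota(\gamma^\vee)$, which matches (\ref{CM1}); when $\sigma = \alpha \in \Pi$, the root $\alpha$ is of type ($a$) and contributes two colors $D^+, D^-$ with $\rho_X(D^+) + \rho_X(D^-) = \iota(\alpha^\vee)$, while each other $\gamma \in \Pi \setminus (\Pi^p \cup \{\alpha\})$ gives a type~($b$) ray; this matches (\ref{CM2}) upon setting $\varrho_\alpha := \rho_X(D^+)$.

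For the converse direction (\ref{comp_with_Gamma1})~$\Rightarrow$~(\ref{comp_with_Gamma2}), I would first apply Proposition~\ref{prop_comp_with_lattice} to construct a spherical homogeneous space $O = G/H$ with $\Lambda_O = \Lambda$ and $\Sigma_O = \{\sigma\}$. In the case $\sigma = \alpha \in \Pi$, I would exploit the freedom left in that construction to pick the color $D^+$ with $\rho(D^+) = \varrho_\alpha$, forcing $\rho(D^-) = \iota(\alpha^\vee) - \varrho_\alpha$. Computing $\rho_O(\mathcal D_O)$ via Proposition~\ref{prop_alternative} then shows that it coincides, ray by ray, with the distinguished generators appearing in (\ref{CM1}) or (\ref{CM2}). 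Combined with $\mathcal V_O = \mathcal V_\sigma$ and the hypothesis that $\mathcal K$ is generated by those distinguished elements together with finitely many elements of $\mathcal V_\sigma$, this places $\mathcal K$ in the setup of Proposition~\ref{prop_AC_appl}: the cone is strictly convex since $\Lambda = \ZZ\Gamma$, and $0 \notin \rho_O(\mathcal D_O)$ because each ray passes through a nonzero element. That proposition produces an affine embedding $X$ of $O$ with $\Gamma_X = \Lambda \cap \mathcal K^\vee$; saturatedness of $\Gamma$ gives $\Lambda \cap \mathcal K^\vee = \Gamma$, and $\Sigma_X = \Sigma_O = \{\sigma\}$ by Remark~\ref{rem_invariants}.

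The main obstacle I expect is the bookkeeping in the case $\sigma \in \Pi$: in one direction one must use the freedom in Proposition~\ref{prop_comp_with_lattice} to install a prescribed $\varrho_\alpha$ as the $\rho$-image of one of the two colors, while in the other direction one must verify that the resulting $\varrho_\alpha := \rho_X(D^+)$ genuinely satisfies the generation requirement in (\ref{CM2}); uniqueness up to swapping $D^+$ and $D^-$ is then guaranteed by Remark~\ref{rem_CM1}. The case $\sigma \notin \Pi$ should be essentially routine once the identifications $\mathcal K = \mathcal K_X$ and $\mathcal V_X = \mathcal V_\sigma$ are in place.
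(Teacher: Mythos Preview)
Your proposal is correct and follows essentially the same route as the paper: in both directions the argument goes through the homogeneous spherical datum of the open orbit, computes $\rho_O(\mathcal D_O)$ via Proposition~\ref{prop_alternative}, identifies $\mathcal K$ with the cone generated by colors and $G$-invariant valuations (Proposition~\ref{prop_weight_monoid}), and invokes Proposition~\ref{prop_AC_appl} for the existence of the affine embedding. The only cosmetic difference is that in the direction (\ref{comp_with_Gamma1})~$\Rightarrow$~(\ref{comp_with_Gamma2}) the paper writes down the quadruple $(\Lambda, \Pi^p, \{\sigma\}, \mathcal S(\alpha))$ explicitly rather than appealing to Proposition~\ref{prop_comp_with_lattice} and its freedom in choosing $\rho(D^+)$; and in the direction (\ref{comp_with_Gamma2})~$\Rightarrow$~(\ref{comp_with_Gamma1}) the paper explicitly cites axiom~(A1) to ensure $\langle \rho_X(D^+), \alpha \rangle = 1$, a point you flag as the main bookkeeping hurdle but do not spell out.
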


\begin{proof}
For every spherical root $\sigma \in \Sigma(G) \cap \ZZ \Gamma$, we put $\mathcal V_\sigma = \lbrace q \in \mathcal Q \mid \langle q, \sigma \rangle \le 0 \rbrace$.

(\ref{comp_with_Gamma1})$\Rightarrow$(\ref{comp_with_Gamma2}) We consider two cases.

\textit{Case}~1: $\sigma \notin \Pi$. Then $\mathscr H = (\ZZ \Gamma, \Gamma^\perp, \lbrace \sigma \rbrace, \varnothing)$ is a homogeneous spherical datum.  By Theorem~\ref{thm_bijection_SHS}, there is a spherical homogeneous space $O$ of $G$ such that $\mathscr H_O = \mathscr H$. In view of Propositions~\ref{prop_alternative} and~\ref{prop_disjoint_union}, we thus have
\begin{equation} \label{eqn_colors_case2}
\rho_O(\mathcal D_O) =
\begin{cases}
\lbrace \iota(\gamma^\vee) \mid \gamma \in \Pi \setminus \Gamma^\perp \rbrace & \text{if} \ \sigma \notin 2\Pi;\\
\lbrace \iota(\alpha^\vee)/2 \rbrace \cup \lbrace \iota(\gamma^\vee) \mid \gamma \in \Pi \setminus (\Gamma^\perp \cup \lbrace \alpha \rbrace) \rbrace & \text{if} \ \sigma = 2\alpha \in 2\Pi.
\end{cases}
\end{equation}
In particular, $0 \notin \rho_O(\mathcal D_O)$. It follows from~(\ref{eqn_colors_case2}) and condition~(\ref{CM1}) that the cone $\mathcal K$ is generated by the set $\rho_O(\mathcal D_O)$ and finitely many elements of $\mathcal V_\sigma$. As $\mathcal K$ is strictly convex, by Proposition~\ref{prop_AC_appl} there exists an affine embedding $X$ of~$O$ such that $\Gamma_X = \Gamma$.

\textit{Case}~2: $\sigma = \alpha \in \Pi$. We consider the quadruple $\mathscr H = (\ZZ \Gamma, \Gamma^\perp, \lbrace \alpha \rbrace, \mathcal D(\alpha))$, where $\mathcal D(\alpha)$ is equipped with the above map~$\rho$. It is easily verified that $\mathscr H$ is a homogeneous spherical datum. By Theorem~\ref{thm_bijection_SHS}, there is a spherical homogeneous space $O$ of $G$ such that $\mathscr H_O = \mathscr H$. Then by Propositions~\ref{prop_alternative} and~\ref{prop_disjoint_union} we have
\begin{equation} \label{eqn_colors_case1}
\rho_O(\mathcal D_O) = \lbrace \varrho_1, \varrho_2 \rbrace \cup \lbrace \iota(\gamma^\vee) \mid \gamma \in \Pi \setminus (\Gamma^\perp \cup \lbrace \alpha \rbrace) \rbrace.
\end{equation}
In particular, $0 \notin \rho_O(\mathcal D_O)$. It follows from~(\ref{eqn_colors_case1}) and condition~(\ref{CM2}) that the cone $\mathcal K$ is generated by the set $\rho_O(\mathcal D_O)$ and finitely many elements of $\mathcal V_\sigma$. As $\mathcal K$ is strictly convex, by Proposition~\ref{prop_AC_appl} there exists an affine embedding $X$ of~$O$ such that $\Gamma_X = \Gamma$.

(\ref{comp_with_Gamma2})$\Rightarrow$(\ref{comp_with_Gamma1}) Let $X$ be an affine spherical $G$-variety with $\Gamma_X = \Gamma$ and $\Sigma_X = \lbrace \sigma \rbrace$ and let $O$ be the open $G$-orbit in~$X$. By Proposition~\ref{prop_weight_monoid}(\ref{prop_weight_monoid_b}), we have $\Lambda_X = \ZZ \Gamma_X$. In view of Remark~\ref{rem_invariants}, Proposition~\ref{prop_comp_with_lattice} implies that $\sigma$ is compatible with~$\Lambda_X$. Thanks to Proposition~\ref{prop_weight_monoid}(\ref{prop_weight_monoid_a}), the cone $\mathcal K_X = \mathcal K$ is generated by the set $\rho_X(\mathcal D_X)$ and finitely many elements of~$\mathcal V_X = \mathcal V_\sigma$. Further, Proposition~\ref{prop_weight_monoid}(\ref{prop_weight_monoid_c}) yields $\Pi^p_X = \Gamma^\perp$. Conditions~(\ref{CM1}) and~(\ref{CM2}) now follow from Propositions~\ref{prop_alternative},~\ref{prop_disjoint_union}, and axiom~(\ref{A1}).
\end{proof}

\begin{corollary} \label{crl_SR_are_in_Sigma_Gamma}
Suppose that $X$ is an affine spherical $G$-variety with $\Gamma_X = \Gamma$. Then $\Sigma_X \subset \Sigma(\Gamma)$.
\end{corollary}

\begin{proof}
Thanks to Proposition~\ref{prop_MF_subset}, for every $\sigma \in \Sigma_X$ there exists an affine spherical $G$-variety $Y$ with $\Gamma_Y =\nobreak \Gamma$ and $\Sigma_Y = \lbrace \sigma \rbrace$, hence $\sigma \in \Sigma(\Gamma)$ by Proposition~\ref{prop_comp_with_monoid}.
\end{proof}


\subsection{Admissible sets of spherical roots for a given monoid}
\label{subsec_admissible_sets}

In this subsection, we obtain one of the main results of this paper: a combinatorial description of the affine spherical $G$-varieties with prescribed weight monoid (Theorem~\ref{Theorem-AS}).

We retain all the notation introduced in~\S\,\ref{subsec_comp_with_monoid}.

\begin{definition}\label{def_AS}
A subset $\Sigma \subset \Sigma(\Gamma)$ is said to be \textit{admissible} if it satisfies the following condition:
\begin{enumerate}[label=\textup{(AP)},ref=\textup{AP}]
\item \label{AP}
for every $\alpha \in \Sigma \cap \Pi$, $D \in \mathcal D(\alpha)$, and $\sigma \in \Sigma \setminus \lbrace \alpha \rbrace$, the inequality $\langle \rho(D), \sigma \rangle \le 1$ holds, and the equality is attained if and only if $\sigma = \beta \in \Pi$ and there is $D' \in \mathcal D(\beta)$ with $\rho(D') = \rho(D)$.
\end{enumerate}
\end{definition}

\begin{remark} \label{rem_AP_consequences}
The following statements follow directly from the definition.
\begin{enumerate}[label=\textup{(\alph*)},ref=\textup{\alph*}]
\item \label{rem_AP_consequences_a}
Every $1$-element subset of $\Sigma(\Gamma)$ is admissible.

\item \label{rem_AP_consequences_b}
Every subset $\Sigma \subset \Sigma(\Gamma)$ with $\Sigma \cap \Pi = \varnothing$ is admissible. In particular, $\Sigma(\Gamma)$ is admissible whenever $\Sigma(\Gamma) \cap \Pi = \varnothing$.

\item \label{rem_AP_consequences_c}
A subset $\Sigma \subset \Sigma(\Gamma)$ is admissible if and only if so is every $2$-element subset of~$\Sigma$.

\item \label{rem_AP_consequences_d}
A subset $\lbrace \alpha, \sigma \rbrace \subset \Sigma(\Gamma)$ with $\alpha \in \Pi$ and $\sigma \notin \Pi$ is admissible if and only if $\langle \varrho, \sigma \rangle \le 0$ for every $\varrho \in \rho(\mathcal D(\alpha))$.

\item \label{rem_AP_consequences_e}
If $\Sigma \subset \Sigma(\Gamma)$ is an admissible subset then every subset $\Sigma' \subset \Sigma$ is also admissible.
\end{enumerate}
\end{remark}

\begin{theorem} \label{Theorem-AS}
For a subset $\Sigma \subset \Sigma(\Gamma)$, the following conditions are equivalent.
\begin{enumerate}[label=\textup{(\arabic*)},ref=\textup{\arabic*}]
\item \label{every_pair_is_admissible1}
$\Sigma$ is admissible.

\item \label{every_pair_is_admissible2}
There exists an affine spherical $G$-variety $X$ with $\Gamma_X = \Gamma$ and $\Sigma_X = \Sigma$.
\end{enumerate}
\end{theorem}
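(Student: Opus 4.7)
My approach is to derive both implications from the combinatorics of spherical homogeneous spaces and their affine embeddings.

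For $(\ref{every_pair_is_admissible2}) \Rightarrow (\ref{every_pair_is_admissible1})$, given $X$ with $\Gamma_X = \Gamma$ and $\Sigma_X = \Sigma$, each $\sigma \in \Sigma$ lies in $\Sigma(\Gamma)$ by Proposition~\ref{prop_comp_with_monoid} applied to an auxiliary variety $Y$ with $\Gamma_Y = \Gamma$ and $\Sigma_Y = \lbrace \sigma \rbrace$ furnished by Proposition~\ref{prop_ASV_subset}. For the admissibility of a pair $(\sigma_1, \sigma_2)$ with $\sigma_1 = \alpha \in \Pi$, the uniqueness in Remark~\ref{rem_CM1} applied to the cone $\mathcal{K}_X = \mathcal{K}$ (Proposition~\ref{prop_weight_monoid}) identifies $\rho_X(\mathcal{D}_X(\alpha))$ with $\rho(\mathcal{S}(\alpha)) = \lbrace \varrho_\alpha, \iota(\alpha^\vee) - \varrho_\alpha \rbrace$; then axiom~(A1) in the homogeneous spherical datum of the open $G$-orbit of~$X$ yields~(\ref{AP}) directly.

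For $(\ref{every_pair_is_admissible1}) \Rightarrow (\ref{every_pair_is_admissible2})$, I would build a homogeneous spherical datum $\mathscr{H} = (\Lambda, \Pi^p, \Sigma, \mathcal{D}^a)$ with $\Lambda = \ZZ\Gamma$, $\Pi^p = \Gamma^\perp$, and
$$
\mathcal{D}^a = \Bigl( \bigsqcup_{\alpha \in \Sigma \cap \Pi} \mathcal{S}(\alpha) \Bigr) \Big/ \sim,
$$
where, for distinct $\alpha, \beta \in \Sigma \cap \Pi$, $D \in \mathcal{S}(\alpha)$ is identified with $D' \in \mathcal{S}(\beta)$ precisely when $\rho(D) = \rho(D')$; the map $\rho$ then descends to $\mathcal{D}^a$. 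The axioms of Definition~\ref{def_HSD} need to be checked for $\mathscr{H}$: axioms~(A1)--(A3) follow by construction, with condition~(\ref{AP}) controlling the pairings $\langle \rho(D), \sigma \rangle$ for $D \in \mathcal{S}(\alpha)$ and $\sigma \in \Sigma \backslash \lbrace \alpha \rbrace$, while axioms~$(\Sigma 1)$, $(\Sigma 2)$ and~(S) reduce, via Proposition~\ref{prop_comp_with_lattice}, to conditions~(\ref{CL1})--(\ref{CL4}) satisfied by each individual $\sigma \in \Sigma(\Gamma)$. Theorem~\ref{thm_bijection_SHS} then produces a spherical homogeneous space $O$ with $\mathscr{H}_O = \mathscr{H}$, so in particular $\Sigma_O = \Sigma$, $\Lambda_O = \Lambda$, and $\Pi^p_O = \Pi^p$.

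It remains to produce an affine embedding $X$ of $O$ with $\Gamma_X = \Gamma$. Propositions~\ref{prop_alternative} and~\ref{prop_disjoint_union} describe $\rho_O(\mathcal{D}_O)$ explicitly as the union of $\rho(\mathcal{D}^a)$, of the points $\tfrac{1}{2}\iota(\alpha^\vee)$ for $\alpha \in \Pi$ with $2\alpha \in \Sigma$, and of the points $\iota(\alpha^\vee)$ for $\alpha \in \Pi \backslash \Pi^p$ of type~$(b)$; in particular $0 \notin \rho_O(\mathcal{D}_O)$. The main obstacle is then to show that $\mathcal{K} = (\QQ^+ \Gamma)^\vee$ is generated by $\rho_O(\mathcal{D}_O)$ together with finitely many elements of $\mathcal{V}_O = \bigcap_{\sigma \in \Sigma} \mathcal{V}_\sigma$, since conditions~(\ref{CM1})/(\ref{CM2}) supply this only with the larger half-space $\mathcal{V}_\sigma$ in place of $\mathcal{V}_O$, one $\sigma$ at a time. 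I plan an extremal-ray argument: any extreme ray of $\mathcal{K}$ not spanned by an element of $\rho_O(\mathcal{D}_O)$ must, upon applying~(\ref{CM1})/(\ref{CM2}) to each $\sigma \in \Sigma$, be spanned by an element of $\mathcal{V}_\sigma$ for every $\sigma$, hence by an element of $\mathcal{V}_O$; polyhedrality of $\mathcal{K}$ then yields finitely many such generators. With this in hand, Proposition~\ref{prop_AC_appl} delivers the sought $X$ with $\Gamma_X = \Gamma$, and necessarily $\Sigma_X = \Sigma_O = \Sigma$.
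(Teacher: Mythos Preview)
Your approach is essentially the paper's, and both directions are set up correctly. There is, however, one point in your extremal-ray argument for $(\ref{every_pair_is_admissible1}) \Rightarrow (\ref{every_pair_is_admissible2})$ that needs more care. When you fix $\sigma \in \Sigma$ and apply~(\ref{CM1}) or~(\ref{CM2}), the explicit generators listed there include $\iota(\gamma^\vee)$ for every $\gamma \in \Pi \setminus \Pi^p$ (resp.\ $\gamma \in \Pi \setminus (\Pi^p \cup \lbrace \sigma \rbrace)$). If such a $\gamma$ happens to lie in $\Sigma \cap \Pi$, the element $\iota(\gamma^\vee)$ is \emph{not} in $\rho_O(\mathcal D_O)$: for simple roots of type~$(a)$ the colors contribute $\varrho_\gamma$ and $\iota(\gamma^\vee)-\varrho_\gamma$, not $\iota(\gamma^\vee)$ itself. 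So your sentence ``must \ldots\ be spanned by an element of $\mathcal V_\sigma$'' does not follow directly, because the extremal ray $\QQ^+ q$ could a priori be $\QQ^+ \iota(\gamma^\vee)$ for some $\gamma \in (\Sigma \cap \Pi)\setminus\lbrace\sigma\rbrace$.

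The fix is exactly the step the paper singles out: in that case apply~(\ref{CM2}) to $\gamma$ itself. Since $\varrho_\gamma$ and $\iota(\gamma^\vee)-\varrho_\gamma$ both lie in $\mathcal K$ and their sum $\iota(\gamma^\vee)$ lies on the extremal ray $\QQ^+ q$, extremality forces both summands onto $\QQ^+ q$; but $\varrho_\gamma \in \rho(\mathcal D^a) \subset \rho_O(\mathcal D_O)$, contradicting the assumption on~$q$. With this one-line addition your argument is complete and matches the paper's proof.
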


\begin{proof}
(\ref{every_pair_is_admissible1})$\Rightarrow$%
(\ref{every_pair_is_admissible2}) First consider the disjoint union $\mathcal S = \bigsqcup \limits_{\alpha \in \Sigma \cap \Pi} \mathcal D(\alpha)$. We introduce an equivalence relation on $\mathcal S$ as follows. For $\alpha, \alpha' \in \Pi \cap \Sigma$, $D \in \mathcal D(\alpha)$, and $D' \in \mathcal D(\alpha')$ we write $D \sim D'$ if and only if one of the following two conditions holds:
\begin{itemize}
\item
$\alpha = \alpha'$ and $D = D'$;

\item
$\alpha \ne \alpha'$ and $\rho(D) = \rho(D')$.
\end{itemize}
Now consider the quotient set $\mathcal D^a = \mathcal S / \!\sim$. By construction, $\mathcal D^a$ is equipped with a well-defined map $\rho \colon \mathcal D^a \to \mathcal L$. For every $\alpha \in \Sigma \cap \Pi$, we shall identify the set $\mathcal D(\alpha)$ with its image in~$\mathcal D^a$. One easily checks that the quadruple $\mathscr H = (\ZZ \Gamma, \Gamma^\perp, \Sigma, \mathcal D^a)$ is a homogeneous spherical datum. By Theorem~\ref{thm_bijection_SHS}, there is a spherical homogeneous space $O$ of $G$ such that $\mathscr H_O = \mathscr H$.  Then Propositions~\ref{prop_alternative} and~\ref{prop_disjoint_union} yield
\begin{equation} \label{eqn_colors}
\rho_O(\mathcal D_O) = \rho(\mathcal D^a) \cup \lbrace \frac12 \iota(\beta^\vee) \mid \beta \in \Pi \cap \frac12 \Sigma \rbrace \cup \lbrace \iota(\beta^\vee) \mid \beta \in \Pi \setminus (\Gamma^\perp \cup \Sigma \cup \frac12 \Sigma \rbrace.
\end{equation}
Note that $0 \notin \rho_O(\mathcal D_O)$.

We now check that the cone $\mathcal K$ is generated by the set $\rho_O(\mathcal D_O)$ and finitely many elements of~$\mathcal V_O$. As $\rho(\mathcal D^a) \subset \mathcal K$ by~(\ref{CM2}), formula~(\ref{eqn_colors}) implies $\rho_O(\mathcal D_O) \subset \mathcal K$. Consequently, it suffices to take an arbitrary element $\varrho \in \mathcal K^1 \setminus \mathcal V_O$ and show that a suitable positive multiple of $\varrho$ lies in $\rho_O(\mathcal D_O)$. Since $\mathcal V_O = \bigcap\limits_{\sigma \in \Sigma} \mathcal V_{\sigma}$, there is a spherical root $\sigma \in \Sigma$ such that $\varrho \in \mathcal K^1(\sigma)$. If $\sigma \in \Pi$ then $\varrho \in \rho_O(\mathcal D_O)$ by~(\ref{CM2}) and~(\ref{eqn_colors}). If $\sigma \notin \Pi$ then by~(\ref{CM1}) there exists $\delta \in \Pi \setminus \Gamma^\perp$ such that $\iota(\delta^\vee)$ is a positive multiple of~$\varrho$. It follows from~(\ref{eqn_colors}) that $\iota(\delta^\vee)$ or $\iota(\delta^\vee)/2$ lies in $\rho_O(\mathcal D_O)$ unless $\delta \in \Pi \cap \Sigma$. But the latter implies $\langle \delta^\vee, \sigma \rangle > 0$, which is impossible because $\delta$ and $\sigma$ are two simple roots in a root system; see \S\,\ref{subsec_comb_inv}.

Thus, the strictly convex cone $\mathcal K$ satisfies all the conditions of Proposition~\ref{prop_AC_appl}, and so there exists an affine embedding $X$ of~$O$ such that $\Gamma_X = \Gamma$.

(\ref{every_pair_is_admissible2})$\Rightarrow$%
(\ref{every_pair_is_admissible1})
Let $X$ be an affine spherical $G$-variety with $\Gamma_X = \Gamma$ and $\Sigma_X = \Sigma$. By Proposition~\ref{prop_weight_monoid}(\ref{prop_weight_monoid_a}), the cone $\mathcal K$ is generated by the set $\rho_X(\mathcal B_X \cup \mathcal D_X)$. Now take any $\alpha \in \Sigma \cap \Pi$. In view of condition~(\ref{CM2}) and~Remark~\ref{rem_CM2}, the set $\mathcal K^1(\alpha)$ is non-empty and is contained in~$\mathcal D(\alpha)$. On the other hand, Propositions~\ref{prop_alternative}, \ref{prop_disjoint_union}, Remark~\ref{rem_invariants}, Theorem~\ref{thm_bijection_SHS}, and axiom~(\ref{A1}) imply $\mathcal K^1(\alpha) \subset \rho_X(\mathcal D_X(\alpha))$, hence $\rho(\mathcal D(\alpha)) = \rho_X(\mathcal D_X(\alpha))$. The latter yields~(\ref{AP}) thanks to axiom~(\ref{A1}).
\end{proof}

\section{Applications to moduli schemes \texorpdfstring{$\mathrm M_\Gamma$}{M_Gamma}}
\label{sec_applications}

Throughout this section, $\Gamma$ stands for a finitely generated and saturated monoid. We retain all the notation introduced at the beginning of~\S\,\ref{subsec_comp_with_monoid}.

\subsection{A combinatorial description of the irreducible components of \texorpdfstring{$\mathrm M_\Gamma$}{M_Gamma}}
\label{subsec_irr_comp}

In this subsection, we apply the results of~\S\,\ref{subsec_admissible_sets} to describe the irreducible components of the moduli scheme~$\mathrm M_\Gamma$.

According to Theorem~\ref{Theorem-AS}, for every admissible subset $\Sigma \subset \Sigma(\Gamma)$ let $X(\Sigma)$ be the affine spherical $G$-variety such that $\Gamma_{X(\Sigma)} = \Gamma$ and $\Sigma_{X(\Sigma)} = \Sigma$.

\begin{theorem} \label{indexationofIrrComp}
The map $\Sigma \mapsto \overline{T_\ad X(\Sigma)}$ is a bijection between the maximal with respect to inclusion admissible subsets of $\Sigma(\Gamma)$ and the irreducible components of~$\mathrm M_\Gamma$. Moreover, $\dim \overline{T_\ad X(\Sigma)} = |\Sigma|$.
\end{theorem}

\begin{proof}
This follows readily from Theorem~\ref{Theorem-AS}, Corollaries~\ref{crl_irr_comp} and~\ref{crl_dimension_of_CX}, and Proposition~\ref{prop_partial_order}.
\end{proof}

In view of Remark~\ref{rem_AP_consequences}(\ref{rem_AP_consequences_a}), the set $\Sigma(\Gamma)$ contains a unique maximal admissible subset if and only if $\Sigma(\Gamma)$ is admissible itself. This along with Theorem~\ref{indexationofIrrComp} yields the following irreducibility criterion for~$\mathrm M_\Gamma$.

\begin{corollary} \label{admissibility-versus-irreducibility}
The following conditions are equivalent.
\begin{enumerate}[label=\textup{(\arabic*)},ref=\textup{\arabic*}]
\item \label{H-SS}
The set $\Sigma(\Gamma)$ is admissible.

\item \label{IRR-mod}
$\mathrm M_\Gamma$ is irreducible.
\end{enumerate}
\end{corollary}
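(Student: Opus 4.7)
The plan is to deduce both implications directly from Theorem~\ref{indexationofIrrComp}, which establishes a bijection between irreducible components of $\mathcal M_\Gamma$ and maximal admissible subsets of $\Sigma(\Gamma)$. The key auxiliary observation is that every singleton $\{\sigma\}$ with $\sigma\in\Sigma(\Gamma)$ is itself admissible: the condition in Definition~\ref{def_AS} requires only that elements of $\Sigma$ lie in $\Sigma(\Gamma)$ and that every pair of \emph{distinct} elements be admissible, and the latter condition is vacuous for singletons.

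For the implication (\ref{H-SS})~$\Rightarrow$~(\ref{IRR-mod}), I would argue that if $\Sigma(\Gamma)$ is admissible, then by Definition~\ref{def_AS} any admissible subset is a subset of $\Sigma(\Gamma)$, so $\Sigma(\Gamma)$ is the unique maximal admissible subset of itself. By Theorem~\ref{indexationofIrrComp}, $\mathcal M_\Gamma$ then has a single irreducible component, hence is irreducible.

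For the converse (\ref{IRR-mod})~$\Rightarrow$~(\ref{H-SS}), I would argue that if $\mathcal M_\Gamma$ is irreducible, Theorem~\ref{indexationofIrrComp} yields a unique maximal admissible subset $\Sigma_{\max}\subset\Sigma(\Gamma)$. Now pick any $\sigma\in\Sigma(\Gamma)$; by the observation above, $\{\sigma\}$ is admissible, and since $\Sigma(\Gamma)$ is finite, $\{\sigma\}$ extends to some maximal admissible subset. Uniqueness forces $\sigma\in\Sigma_{\max}$, so $\Sigma(\Gamma)\subseteq\Sigma_{\max}$, and the reverse inclusion holds by definition. Thus $\Sigma(\Gamma)=\Sigma_{\max}$ is admissible.

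Since the argument only combines Theorem~\ref{indexationofIrrComp} with the trivial admissibility of singletons, there is no real obstacle here; the corollary is essentially a formal unpacking of the classification of irreducible components, and the main work has already been carried out in Theorems~\ref{Theorem-AS} and~\ref{indexationofIrrComp}.
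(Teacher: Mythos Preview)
Your argument is correct and matches the paper's approach: the paper simply states that the corollary follows directly from Theorem~\ref{indexationofIrrComp}, and you have spelled out exactly the routine unpacking of that theorem (maximal admissible subsets correspond to irreducible components, together with the observation that singletons are admissible so every element of $\Sigma(\Gamma)$ lies in some maximal admissible subset).
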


\subsection{The tangent space of \texorpdfstring{$\mathrm M_\Gamma$}{M_Gamma} at~\texorpdfstring{$X_0$}{X_0}}

In this subsection, we present (in a reformulated form) the combinatorial description of the $T_\ad$-module structure in $T_{X_0} \mathrm M_\Gamma$ obtained in~\cite{ACF15}; see Theorem~\ref{thm_tgtspace_description}.
Our version of this description, which will be needed in the remaining part of this section, requires the notions of a $\Gamma$-deviant simple root and a $\Gamma$-loose spherical root.

\begin{definition} \label{def_DR}
A root $\alpha \in \Pi$ is said to be \textit{$\Gamma$-deviant} if $\alpha \in \ZZ\Gamma$ and there exist two distinct elements $\varrho_1, \varrho_2 \in \mathcal K^1$ with the following properties:
\begin{enumerate}[label=\textup{(DR\arabic*)},ref=\textup{DR\arabic*}]
\item \label{DR1}
$\langle \varrho_1, \alpha \rangle = \langle \varrho_2, \alpha \rangle = 1$;

\item \label{DR2}
$\iota(\alpha^\vee) \in (\QQ^+\varrho_1 + \QQ^+\varrho_2) \setminus \lbrace 2\varrho_1, \varrho_1 + \varrho_2, 2\varrho_2 \rbrace$;

\item \label{DR3}
$\mathcal K^1(\alpha) = \lbrace \varrho_1, \varrho_2 \rbrace$.
\end{enumerate}
\end{definition}

The set of all $\Gamma$-deviant roots will be denoted by $\Dev(\Gamma)$.

\begin{remark} \label{rem_deviant_roots}
It follows directly from the definition that every $\alpha \in \Dev(\Gamma)$ has the following properties:
\begin{enumerate}[label=\textup{(\alph*)},ref=\textup{\alph*}]
\item \label{rem_deviant_roots_a}
$\alpha$ is primitive in the lattice $\ZZ\Gamma$;

\item
$\alpha$ is compatible with the lattice $\ZZ \Gamma$;

\item \label{rem_deviant_roots_c}
$\alpha \notin \Sigma(\Gamma)$.
\end{enumerate}
\end{remark}

The following proposition shows that the set $\Dev(\Gamma)$ is empty for a wide class of monoids~$\Gamma$.

\begin{proposition} \label{prop_deviant_empty}
Suppose that $\Gamma = \Gamma_0 \oplus \Lambda_0$ where $\Gamma_0$ is a free monoid and $\Lambda_0$ is a lattice with $\Lambda_0^\perp = \Pi$ \textup(that is, $\Lambda_0 \subset \mathfrak X(C)$\textup). Then $\Dev(\Gamma) = \varnothing$. In particular, $\Dev(\Gamma) = \varnothing$ whenever $\Gamma$ is free.
\end{proposition}

\begin{proof}
Suppose that $\alpha \in \ZZ\Gamma \cap \Pi$ and two distinct elements $\varrho_1, \varrho_2 \in \mathcal K^1$ satisfy conditions~(\ref{DR1})--(\ref{DR3}). Then $\lbrace \varrho_1, \varrho_2 \rbrace$ is a part of a basis of~$\mathcal L$ hence $\iota(\alpha^\vee) = b_1 \varrho_1 + b_2 \varrho_2$ for some $b_1, b_2 \in \ZZ$. In this case, one easily checks that conditions~(\ref{DR1}), (\ref{DR2}) cannot hold simultaneously.
\end{proof}

Examples of $\Gamma$ with $\Dev(\Gamma) \ne \varnothing$ are given in \S\,\ref{subsec_examples_non-reduced}.

For our description of $T_{X_0} \mathrm M_\Gamma$, we shall need the following  lemma.

\begin{lemma} \label{lemma_deviant+compatible}
For an element $\alpha \in \ZZ \Gamma \cap \Pi$, the following conditions are equivalent.
\begin{enumerate}[label=\textup{(\arabic*)},ref=\textup{\arabic*}]
\item \label{lemma_deviant+compatible_1}
$\alpha \in \Dev(\Gamma) \cup \Sigma(\Gamma)$.

\item \label{lemma_deviant+compatible_2}
There exist two elements $\varrho_1, \varrho_2 \in \mathcal K \cap \mathcal L$ satisfying the following conditions:
\begin{enumerate}[label=\textup{(\alph*)},ref=\textup{\alph*}]
\item \label{DCa}
$\langle \varrho_1, \alpha \rangle = \langle \varrho_2, \alpha \rangle = 1$;

\item \label{DCb}
$\iota(\alpha^\vee) \in (\QQ^+\varrho_1 + \QQ^+\varrho_2) \setminus \lbrace 2\varrho_1, 2\varrho_2 \rbrace$;

\item \label{DCc}
$\mathcal K^1(\alpha) \subset \lbrace \varrho_1, \varrho_2 \rbrace$.
\end{enumerate}
\end{enumerate}
\end{lemma}

\begin{proof}
The implication (\ref{lemma_deviant+compatible_1})$\Rightarrow$(\ref{lemma_deviant+compatible_2}) follows directly from Definitions~\ref{def_SR_comp_with_monoid} and~\ref{def_DR}. To prove the converse implication, suppose that two elements $\varrho_1, \varrho_2 \in \mathcal K \cap \mathcal L$ satisfy conditions (\ref{DCa})--(\ref{DCc}). Clearly, $\mathcal K^1(\alpha) \ne \varnothing$, and so we have two cases.

\textit{Case}~1: $|\mathcal K^1(\alpha)|= 2$, that is, $\mathcal K^1(\alpha) = \lbrace \varrho_1, \varrho_2 \rbrace$. It follows from Definitions~\ref{def_SR_comp_with_monoid} and~\ref{def_DR} that $\alpha \in \Dev(\Gamma) \cup \Sigma(\Gamma)$.

\textit{Case}~2: $|\mathcal K^1(\alpha)| = 1$. Without loss of generality we assume that $\mathcal K^1(\alpha) = \lbrace \varrho_1 \rbrace$. We claim that $\alpha \in \Sigma(\Gamma)$. To check condition~(\ref{CM2}) it suffices to prove that the element $\varrho'_2 = \iota(\alpha^\vee) - \varrho_1$ lies in the cone~$\mathcal K$. Let $a,b \in \QQ^+ \setminus \lbrace 0 \rbrace$ be such that $\iota(\alpha^\vee) = a\varrho_1 + b\varrho_2$; note that $a+b = 2$. Next, as $\varrho_2 \in \mathcal K$ there is an expression $\varrho_2 = c\varrho_1 + \tau$ where $c \in \QQ^+$ and $\tau$ is an element of the cone spanned by the set $\mathcal K^1 \setminus \lbrace \varrho_1 \rbrace$. Since $\langle \tau, \alpha \rangle \le 0$, it follows from~(\ref{DCa}) that $c \ge 1$. We have $\varrho'_2 = (a+bc-1) \varrho_1 + b \tau$ with $a + bc - 1 \ge a+b - 1 = 1$, and so $\varrho'_2 \in \mathcal K$.
\end{proof}

\begin{definition}
A spherical root $\sigma \in \Sigma(G)$ is said to be \textit{$\Gamma$-loose\footnote{The term is taken from~\cite[\S\,2.2]{BL} where it was used in a similar situation.}} if $\sigma \in \Sigma(\Gamma)$ and one of the following conditions holds:
\begin{enumerate}[label=\textup{(LR\arabic*)},ref=\textup{LR\arabic*}]
\item
$\sigma \notin \ZZ \Pi$;

\item
$\sigma = \alpha \in \Pi$ and $\rho(\mathcal D(\alpha)) = \lbrace \iota(\alpha^\vee)/2 \rbrace$;

\item
$\sigma = \alpha_1 + \ldots + \alpha_r$ with $\Supp \sigma$ of type~$\mathsf B_r$ \textup($r \ge 2$\textup) and $\alpha_r \in \Gamma^\perp$;

\item
$\sigma = 2\alpha_1 + \alpha_2$ with $\Supp \sigma$ of type~$\mathsf G_2$.
\end{enumerate}
\end{definition}

Note that $2\sigma \in \Sigma(G) \cap \ZZ \Pi$ for every $\Gamma$-loose $\sigma \in \Sigma(G)$.

For every $\sigma \in \Sigma(\Gamma)$ we define the element $\overline \sigma \in \lbrace \sigma, 2\sigma \rbrace$ as follows:
\[
\overline \sigma =
\begin{cases}
2\sigma & \text{if } \sigma \text{ is $\Gamma$-loose}; \\
\sigma & \text{otherwise}.
\end{cases}
\]

The following theorem, which is a particular case of~\cite[Theorem~2]{Lo09a} (see also \cite[Theorem~4.20]{ACF15}), explains the role played by $\Gamma$-loose spherical roots for affine spherical $G$-varieties.

\begin{theorem} \label{thm_overline_Sigma}
Suppose that $X$ is an affine spherical $G$-variety with weight monoid~$\Gamma$. Then $\overline \Sigma_X = \lbrace \overline \sigma \mid \sigma \in \Sigma_X \rbrace$.
\end{theorem}

Next, we define the set
\[
\overline \Sigma(\Gamma) = \lbrace \overline \sigma \mid \sigma \in \Sigma(\Gamma) \rbrace \subset \Sigma(G).
\]
Note that $\sigma \mapsto \overline \sigma$ is a natural bijection between $\Sigma(\Gamma)$ and $\overline \Sigma(\Gamma)$.

\begin{remark} \label{rem_intersection_is_empty}
$\overline \Sigma(\Gamma) \cap \Dev(\Gamma) = \varnothing$.
\end{remark}

We now put
\[
\Phi(\Gamma)= \lbrace \sigma \in \mathfrak X(T_\ad) \mid -\sigma \textit{ is a $T_\ad$-weight of~$T_{X_0} \mathrm M_\Gamma$}\rbrace.
\]
In other words, $\Phi(\Gamma)$ is the set of $T_\ad$-weights in the cotangent space of $\mathrm M_\Gamma$ at~$X_0$.

The following theorem is a reformulation of~\cite[Theorem~3.1]{ACF15}.

\begin{theorem}
\label{thm_tgtspace_description}
The tangent space $T_{X_0}\mathrm M_\Gamma$ is a multiplicity-free $T_\ad$-module\footnote{Here the term ``multiplicity-free $T_\ad$-module'' should not be mixed with ``multiplicity-free $T_\ad$-variety'', which has a different meaning.}. Moreover, $\Phi(\Gamma) = \overline \Sigma(\Gamma) \cup \Dev(\Gamma)$.
\end{theorem}

\begin{proof}
This follows by comparing \cite[Theorem~3.1]{ACF15} with the definitions of $\Sigma(\Gamma)$, $\overline \Sigma(\Gamma)$, $\Dev(\Gamma)$ and taking into account Lemma~\ref{lemma_deviant+compatible}.
\end{proof}

Combining this theorem together with Proposition~\ref{prop_deviant_empty}, we obtain

\begin{corollary}
Suppose that $\Gamma$ is free. Then $\Phi(\Gamma) = \overline \Sigma(\Gamma)$.
\end{corollary}

\subsection{A smoothness criterion for \texorpdfstring{$\mathrm M_\Gamma$}{M_Gamma}}

\begin{theorem} \label{thm_smoothness_criterion}
The following conditions are equivalent.
\begin{enumerate}[label=\textup{(\arabic*)},ref=\textup{\arabic*}]
\item \label{thm_smoothness_criterion_1}
The set $\Sigma(\Gamma)$ is admissible and $\Dev(\Gamma) = \varnothing$.

\item \label{thm_smoothness_criterion_2}
$\mathrm M_\Gamma$ is an affine space \textup(as a scheme\textup).
\end{enumerate}
\end{theorem}

\begin{proof}
(\ref{thm_smoothness_criterion_1})$\Rightarrow$%
(\ref{thm_smoothness_criterion_2})
Theorems~\ref{indexationofIrrComp} and~\ref{thm_tgtspace_description} imply that $\mathrm M_\Gamma$ is smooth at~$X_0$, and so $\mathrm M_\Gamma$ is an affine space by Theorem~\ref{thm_smoothness}.

(\ref{thm_smoothness_criterion_2})$\Rightarrow$%
(\ref{thm_smoothness_criterion_1})
The set $\Sigma(\Gamma)$ is admissible by Corollary~\ref{admissibility-versus-irreducibility}. Next, thanks to Theorem~\ref{indexationofIrrComp}, there exists an affine spherical $G$-variety $X$ with $\Gamma_X = \Gamma$ such that $\mathrm M_\Gamma = \overline{T_\ad X}$. As $\mathrm M_\Gamma$ is an affine space, Theorem~\ref{thm_orbit_closure_of_X} yields $\Phi(\Gamma) = \overline \Sigma_X$. Now Theorem~\ref{thm_tgtspace_description}, Remark~\ref{rem_deviant_roots}(\ref{rem_deviant_roots_a},\,\ref{rem_deviant_roots_c}), and Corollary~\ref{crl_SR_are_in_Sigma_Gamma} imply $\Dev(\Gamma) = \varnothing$.
\end{proof}

\begin{corollary} \label{crl_reducedness_criterion}
Suppose that the set $\Sigma(\Gamma)$ is admissible \textup(or, equivalently, $\mathrm M_\Gamma$ is irreducible\textup). Then the following conditions are equivalent.
\begin{enumerate}[label=\textup{(\arabic*)},ref=\textup{\arabic*}]
\item \label{crl_reducedness_criterion_1}
$\Dev(\Gamma) = \varnothing$.

\item \label{crl_reducedness_criterion_2}
$\mathrm M_\Gamma$ is reduced.
\end{enumerate}
\end{corollary}

\begin{proof}
(\ref{crl_reducedness_criterion_1})$\Rightarrow$%
(\ref{crl_reducedness_criterion_2}) This follows from Theorem~\ref{thm_smoothness_criterion}.

(\ref{crl_reducedness_criterion_2})$\Rightarrow$%
(\ref{crl_reducedness_criterion_1}) Applying Theorems~\ref{indexationofIrrComp}, \ref{thm_orbit_closure_of_X}, \ref{thm_root_monoid_is_free}, and~\ref{thm_overline_Sigma} we find that $\Phi(\Gamma) = \overline \Sigma(\Gamma)$, whence $\Dev(\Gamma) = \varnothing$ by Theorem~\ref{thm_tgtspace_description} and Remark~\ref{rem_intersection_is_empty}.
\end{proof}

\subsection{Sufficient conditions for \texorpdfstring{$\mathrm M_\Gamma$}{M_Gamma} to be irreducible and/or smooth}

To establish such conditions, we shall need the three following lemmas.

\begin{lemma} \label{lemma_alpha_sigma}
Let $\alpha \in \Sigma(\Gamma) \cap \Pi$ and $\sigma \in \Sigma(\Gamma) \setminus \Pi$. Suppose that $\rho(\mathcal D(\alpha)) = \lbrace \iota(\alpha^\vee)/2 \rbrace$. Then the set $\lbrace \alpha, \sigma \rbrace$ is admissible.
\end{lemma}

\begin{proof}
Thanks to Remark~\ref{rem_AP_consequences}(\ref{rem_AP_consequences_d}), we need to show that $\langle \alpha^\vee, \sigma \rangle \le 0$. If $\alpha\not\in \Supp \sigma$ then the latter inequality holds automatically, therefore we may assume that $\alpha \in \Supp \sigma$. Since both pairs $(\Gamma^\perp, \alpha)$ and $(\Gamma^\perp, \sigma)$ are compatible, it follows from~(\ref{eqn_Pi_p}) that $\Pi_\sigma \subset \alpha^\perp$, that is, $\alpha \perp \Pi_\sigma$. An inspection of Table~\ref{table_spherical_roots} shows that the conditions $\langle \alpha^\vee, \sigma \rangle > 0$ and $\alpha \perp \Pi_\sigma$ can hold simultaneously only in one of the following three cases:
\begin{enumerate}[label=\textup{(\arabic*)},ref=\textup{\arabic*}]
\item
$\sigma = \alpha_1 + \alpha_2$ with $\Supp \sigma$ of type $\mathsf A_2$ and $\alpha \in \lbrace \alpha_1, \alpha_2 \rbrace$;

\item
$\sigma = \alpha_1 + \alpha_2$ with $\Supp \sigma$ of type $\mathsf B_2$ and $\alpha = \alpha_1$;

\item
$\sigma = \alpha_1 + \alpha_2$ with $\Supp \sigma$ of type $\mathsf G_2$ and $\alpha = \alpha_2$.
\end{enumerate}
Further, the condition $\iota(\alpha^\vee)/2 \in \mathcal L$ implies $\langle \alpha^\vee, \sigma \rangle \in 2\ZZ$, which is not the case in any of the above three situations. Thus $\langle \alpha^\vee, \sigma \rangle \le 0$.
\end{proof}

\begin{lemma} \label{lemma_not_on_an_extremal_ray}
Suppose that $\alpha \in (\overline \Sigma(\Gamma) \cap \Pi) \cup \Dev(\Gamma)$. Then $\iota(\alpha^\vee)$ does not lie on an extremal ray of the cone~$\mathcal K$.
\end{lemma}

\begin{proof}
This follows by comparing the definitions of the sets $\Sigma(\Gamma)$, $\overline \Sigma(\Gamma)$, and $\Dev(\Gamma)$.
\end{proof}

\begin{lemma} \label{lemma_nonempty}
Suppose that $\alpha \in (\Sigma(\Gamma) \cap \Pi) \cup \Dev(\Gamma)$ and $X$ is an affine spherical $G$-variety with $\Gamma_X = \Gamma$. Then $\mathcal D_X(\alpha) \ne \varnothing$. Moreover,
\[
|\mathcal D_X(\alpha)| =
\begin{cases}
2 & \text{if } \alpha \in \Sigma_X;\\
1 & \text{otherwise.}
\end{cases}
\]
\end{lemma}

\begin{proof}
It follows from the hypothesis that $\alpha \in \ZZ \Gamma$, which implies $\alpha \notin \Pi^p_X$ by Proposition~\ref{prop_weight_monoid}(\ref{prop_weight_monoid_c}). Now the claim follows from Proposition~\ref{prop_alternative}.
\end{proof}

\begin{proposition} \label{prop_irr_cond1}
Suppose that $\rho(\mathcal D(\alpha)) \subset \mathcal K^1$ for every $\alpha \in \Sigma(\Gamma) \cap \Pi$. Then
\begin{enumerate}[label=\textup{(\alph*)},ref=\textup{\alph*}]
\item \label{prop_irr_cond1_a}
the set $\Sigma(\Gamma)$ is admissible;

\item \label{prop_irr_cond1_b}
$\mathrm M_\Gamma$ is irreducible.
\end{enumerate}
\end{proposition}

\begin{proof}
(\ref{prop_irr_cond1_a}) According to Remark~\ref{rem_AP_consequences}(\ref{rem_AP_consequences_b},\,\ref{rem_AP_consequences_c}), it is enough to show that every set $\lbrace \alpha, \sigma \rbrace$ with $\alpha \in \Sigma(\Gamma) \cap \Pi$ and $\sigma \in \Sigma(\Gamma) \setminus \lbrace \alpha \rbrace$ is admissible.

\textit{Case~}1: $\sigma \notin \Pi$. Assume that there is $D \in \mathcal D(\alpha)$ such that $\langle \rho(D), \sigma \rangle > 0$. Since $\rho(D) \in \mathcal K^1$, it follows from (\ref{CM1}) that $\rho(D)$ is proportional to $\iota(\beta^\vee)$ for some $\beta \in \Pi$. As $\langle \rho(D), \alpha \rangle = 1$, we obtain $\beta = \alpha$ and hence $\rho(\mathcal D(\alpha)) = \lbrace \iota(\alpha^\vee)/2\rbrace$. Now the claim is implied by Lemma~\ref{lemma_alpha_sigma}.

\textit{Case~}2: $\sigma = \beta \in \Pi$. Assume that there is $D \in \mathcal D(\alpha)$ such that $\langle \rho(D), \beta \rangle > 0$. Since $\rho(D) \in \mathcal K^1$, it follows from (\ref{CM2}) that $\rho(D) = \rho(D')$ for some $D' \in \mathcal D(\beta)$.

Part~(\ref{prop_irr_cond1_b}) follows from~(\ref{prop_irr_cond1_a}) thanks to Corollary~\ref{admissibility-versus-irreducibility}.
\end{proof}

The next proposition describes a class of monoids $\Gamma$ for which $\mathrm M_\Gamma$ is an affine space.

\begin{proposition} \label{prop_irr_cond_free}
Suppose that $\Gamma = \Gamma_0 \oplus \Lambda_0$ where $\Gamma_0$ is a free monoid with minimal set of generators $\mathrm E$ and $\Lambda_0$ is a lattice with $\Lambda_0^\perp = \Pi$ \textup(that is, $\Lambda_0 \subset \mathfrak X(C)$\textup). Suppose that every $\alpha \in \Pi$ satisfies one of the following conditions:
\begin{enumerate}[label=\textup{(\arabic*)},ref=\textup{\arabic*}]
\item \label{prop_irr_cond_free_1}
$\langle \alpha^\vee, \lambda\rangle > 0$ for at most one $\lambda \in \mathrm E$;

\item \label{prop_irr_cond_free_2}
$\langle \alpha^\vee, \lambda \rangle \le 1$ for all $\lambda \in \mathrm E$.
\end{enumerate}
Then $\mathrm M_\Gamma$ is an affine space.
\end{proposition}

\begin{proof}
Proposition~\ref{prop_deviant_empty} yields $\Dev(\Gamma) = \varnothing$, hence by Theorem~\ref{thm_smoothness_criterion} and Proposition~\ref{prop_irr_cond1} it suffices to show that $\rho(\mathcal D(\alpha)) \subset \mathcal K^1$ for every $\alpha \in \Sigma(\Gamma) \cap \Pi$.

For every $\lambda \in \mathrm E$, let $\varrho_\lambda \in \mathcal K^1$ be the respective dual element.

Take an arbitrary root $\alpha \in \Sigma(\Gamma) \cap \Pi$. Clearly, $\iota(\alpha^\vee) \ne 0$. If $\alpha$ satisfies (\ref{prop_irr_cond_free_1}) then $\iota(\alpha^\vee)$ lies on an extremal ray of $\mathcal K$, which by Lemma~\ref{lemma_not_on_an_extremal_ray} implies $\rho(\mathcal D(\alpha)) = \lbrace \iota(\alpha^\vee)/2 \rbrace \subset \mathcal K^1$. In what follows we assume that $\alpha$ satisfies~(\ref{prop_irr_cond_free_2}). As $\alpha \in \Sigma(\Gamma)$, there is an expression $\alpha = \sum \limits_{\lambda \in \mathrm E} c_\lambda \lambda + \mu$ where $c_\lambda \in \ZZ$ and $\mu \in \Lambda_0$. Note that $c_\lambda = \langle \varrho_\lambda, \alpha \rangle$ for all $\lambda \in \mathrm E$. It follows from (\ref{CM2}) that $c_\lambda \le 1$ for all $\lambda \in \mathrm E$. Now
\[
\langle \alpha^\vee, \alpha \rangle = 2 = \sum \limits_{\lambda \in \mathrm E} c_\lambda \langle \alpha^\vee, \lambda \rangle,
\]
which in view of (\ref{prop_irr_cond_free_2}) implies that there exist two distinct elements $\lambda_1, \lambda_2 \in \mathrm E$ such that $c_{\lambda_1} = 1$ and $c_{\lambda_2} = 1$. It follows from (\ref{CM2}) that $\rho(\mathcal D(\alpha)) = \lbrace \varrho_{\lambda_1}, \varrho_{\lambda_2} \rbrace \subset \mathcal K^1$.
\end{proof}

\begin{remark}
As we shall see in Remark~\ref{rem_factorial_alt} below, the weight monoid of any affine spherical $G$-variety $X$ with $\Bbbk[X]$ a unique factorization domain is of the form described in Proposition~\ref{prop_irr_cond_free}. In particular, so is the weight monoid of any spherical $G$-module.
\end{remark}

\begin{proposition} \label{prop_irr_cond2}
Suppose that there exists an affine spherical $G$-variety $X$ with $\Gamma_X = \Gamma$ such that $\overline \Sigma(\Gamma) \cap \Pi \subset \Sigma_X$ and $\rho_X(D)$ lies on an extremal ray of~$\mathcal K$ for every $D \in \mathcal D_X$. Then $\mathrm M_\Gamma$ is an affine space.
\end{proposition}

\begin{proof}
By Theorem~\ref{thm_smoothness_criterion}, we need to prove that the set $\Sigma(\Gamma)$ is admissible and $\Dev(\Gamma) = \varnothing$.

In view of Proposition~\ref{prop_irr_cond1}(\ref{prop_irr_cond1_a}), to check the admissibility of $\Sigma(\Gamma)$ it suffices to show that $\rho(\mathcal D(\alpha)) \subset \mathcal K^1$ for every $\alpha \in \Sigma(\Gamma) \cap \Pi$.

\textit{Case}~1: $\alpha \in \overline \Sigma(\Gamma)$. The hypotheses imply $\alpha \in \Sigma_X$, whence $\rho(\mathcal D(\alpha)) = \rho(\mathcal D_X(\alpha)) \subset \mathcal K^1$.

\textit{Case}~2: $\alpha \notin \overline \Sigma(\Gamma)$. Then $\rho(\mathcal D(\alpha)) = \lbrace \iota(\alpha^\vee)/2 \rbrace$ by the definition of~$\overline \Sigma(\Gamma)$, and Remark~\ref{rem_CM2} yields $\rho(\mathcal D(\alpha)) \subset \mathcal K^1$.

We now show that $\Dev(\Gamma) = \varnothing$. Take any $\alpha \in \Dev(\Gamma)$. By Lemma~\ref{lemma_nonempty}, the set $\mathcal D_X(\alpha)$ contains a unique element~$D$. Proposition~\ref{prop_alternative} then implies that $\rho(D)$ is proportional to $\iota(\alpha^\vee)$, and so $\iota(\alpha^\vee)$ lies on an extremal ray of~$\mathcal K^1$, which contradicts Lemma~\ref{lemma_not_on_an_extremal_ray}.
\end{proof}

\begin{proposition} \label{prop_factorial_case}
Suppose that there exists an affine spherical $G$-variety $X$ with $\Gamma_X = \Gamma$ such that $\Bbbk[X]$ is a unique factorization domain. Then $\mathrm M_\Gamma$ is an affine space.
\end{proposition}

\begin{proof}
The claim will follow as soon as we check the conditions of Proposition~\ref{prop_irr_cond2}.

It is well known that under our hypotheses all the elements $\rho_X(D)$ with $D \in \mathcal B_X \cup \mathcal D_X$ are linearly independent in~$\mathcal L$; we recall the proof for convenience of the reader. As $\Bbbk[X]$ is a unique factorization domain, the divisor class group of $X$ is trivial. Consequently, for every $D \in \mathcal B_X \cup \mathcal D_X$ there exists a function $f_D \in \Bbbk[X]$ such that $D$ is the divisor of zeros of~$f_D$. As $D$ is $B$-stable, $f_D$ is $B$-semi-invariant, and we let $\lambda_D \in \Gamma$ be the weight of~$f_D$. Then for all $D, D' \in \mathcal B_X \cup \mathcal D_X$ one has
\begin{equation} \label{eqn_lin_ind}
\langle \rho_X(D), \lambda_{D'} \rangle =
\begin{cases}
1 & \text{ if } D = D'; \\
0 & \text{ if } D \ne D'.
\end{cases}
\end{equation}
It follows that all the elements $\rho_X(D)$ with $D \in \mathcal B_X \cup \mathcal D_X$ are linearly independent in~$\mathcal L$. Since these elements generate the cone $\mathcal K$ (see Proposition~\ref{prop_weight_monoid}(\ref{prop_weight_monoid_a})), we get $\mathcal \rho_X(\mathcal D_X) \subset \mathcal K^1$.

To complete the proof, it suffices to show that $\overline \Sigma(\Gamma) \cap \Pi \subset \Sigma_X$. Take any $\alpha \in \overline \Sigma(\Gamma) \cap\nobreak \Pi$ and assume that $\alpha \notin \Sigma_X$. Then by Lemma~\ref{lemma_nonempty} the set $\mathcal D_X(\alpha)$ contains a unique element~$D$. It follows from Proposition~\ref{prop_alternative} that $\iota(\alpha^\vee)$ lies on an extremal ray of the cone $\mathcal K$, which contradicts Lemma~\ref{lemma_not_on_an_extremal_ray}.
\end{proof}

\begin{remark} \label{rem_factorial_alt}
Suppose that $X$ is an affine spherical $G$-variety with $\Gamma_X = \Gamma$ such that $\Bbbk[X]$ is a unique factorization domain. Then, combining relations~(\ref{eqn_lin_ind}) with Proposition~\ref{prop_alternative}, it is easy to see that the weight monoid of~$X$ satisfies the conditions of Proposition~\ref{prop_irr_cond_free} with $\mathrm E = \lbrace \lambda_D \mid D \in \mathcal B_X \cup \mathcal D_X \rbrace$ and $\Lambda_0$ the lattice of weights of invertible $G$-semi-invariant regular functions\footnote{In fact, every invertible regular function on~$X$ is automatically $G$-semi-invariant by~\cite[Theorem~1]{Ro61}.} on~$X$. This gives an alternative proof of Proposition~\ref{prop_factorial_case}.
\end{remark}

The following statement recovers the main results of the papers \cite{PvS12, PvS16}.

\begin{corollary} \label{crl_spherical_modules}
Suppose that there exists a spherical $G$-module $V$ with $\Gamma_V = \Gamma$. Then
\begin{enumerate}[label=\textup{(\alph*)},ref=\textup{\alph*}]
\item \label{crl_spherical_modules_a}
$\mathrm M_\Gamma$ is an affine space;

\item \label{crl_spherical_modules_b}
$\mathrm M_\Gamma = \overline{T_\ad V}$.
\end{enumerate}
\end{corollary}

\begin{proof}
As $\Bbbk[V]$ is a unique factorization domain, part (\ref{crl_spherical_modules_a}) follows from Proposition~\ref{prop_factorial_case}. Part~(\ref{crl_spherical_modules_b}) is implied by \cite[Corollary~2.9]{AB} because $V$ is smooth.
\end{proof}

We now describe one more class of monoids $\Gamma$ for which $\mathrm M_\Gamma$ is an affine space.

\begin{definition}
A finitely generated monoid $\Gamma \subset \Lambda^+$ is called \textit{$G$-saturated} if
\[
\Gamma=\mathbb Z\Gamma\cap \Lambda^+.
\]
\end{definition}

\begin{remark}
Every $G$-saturated monoid is automatically saturated.
\end{remark}

\begin{remark} \label{rem_G-saturated}
A monoid $\Gamma$ is $G$-saturated if and only if its dual cone $\mathcal K$ is generated by the set $\lbrace\iota(\gamma^\vee) \mid \gamma \in \Pi \backslash \Pi^p \rbrace$.
\end{remark}

\begin{theorem} \label{thm_G-saturated}
Suppose that $\Gamma$ is $G$-saturated. Then
\begin{enumerate}[label=\textup{(\alph*)},ref=\textup{\alph*}]
\item \label{thm_G-saturated_a}
$\Dev(\Gamma) = \varnothing$;

\item \label{thm_G-saturated_b}
the set $\Sigma(\Gamma)$ is admissible;

\item \label{thm_G-saturated_c}
$\mathrm M_\Gamma$ is an affine space.
\end{enumerate}
\end{theorem}

\begin{proof}
(\ref{thm_G-saturated_a}) In view of Remark~\ref{rem_G-saturated}, for every $\alpha \in \ZZ \Gamma \cap \Pi$ the set $\mathcal K^1(\alpha)$ contains a unique element, which is proportional to~$\iota(\alpha^\vee)$. Hence $\Dev(\Gamma) = \varnothing$ by Lemma~\ref{lemma_not_on_an_extremal_ray}.

(\ref{thm_G-saturated_b})
Take any $\alpha \in \Sigma(\Gamma) \cap \Pi$. Lemma~\ref{lemma_not_on_an_extremal_ray} together with Remark~\ref{rem_G-saturated} imply that $\rho(\mathcal D(\alpha)) = \lbrace \iota(\alpha^\vee)/2 \rbrace$, whence $\rho(\mathcal D(\alpha)) \subset \mathcal K^1$. Then the set $\Sigma(\Gamma)$ is admissible by Proposition~\ref{prop_irr_cond1}(\ref{prop_irr_cond1_a}).

(\ref{thm_G-saturated_c}) This follows from~(\ref{thm_G-saturated_a}) and~(\ref{thm_G-saturated_b}) thanks to Theorem~\ref{thm_smoothness_criterion}.
\end{proof}

\begin{remark}
For the case where $\Gamma$ is free and $G$-saturated, the fact that $\mathrm M_\Gamma$ is an affine space was known before thanks to the papers~\cite{Jan,BraCu08}.
\end{remark}

\subsection{Examples of reducible \texorpdfstring{$\mathrm M_\Gamma$}{M_Gamma}}
\label{subsec_examples_reducible}

Recall from Theorem~\ref{indexationofIrrComp} that the irreducible components of $\mathrm M_\Gamma$ are in bijection with the maximal admissible subsets of $\Sigma(\Gamma)$.

In all the five examples presented in this subsection, the monoid $\Gamma$ is free, $\Sigma(\Gamma) = \lbrace \sigma_1, \sigma_2 \rbrace$ for two distinct elements $\sigma_1, \sigma_2 \in \Sigma(G)$, and the whole set $\Sigma(\Gamma)$ is not admissible. Thus $\mathrm M_\Gamma$ turns out to have two irreducible components of dimension~$1$ meeting at the point~$X_0$.

Despite all the examples show the same geometrical picture of~$\mathrm M_\Gamma$, our motivation for constructing them was to reveal different combinatorial types of a non-admissible pair of spherical roots in~$\Sigma(\Gamma)$. Namely, as every admissible subset of $\Sigma(\Gamma)$ is a set of simple roots of a root system in $\QQ \Gamma$ (see \S\,\ref{subsec_comb_inv}), it is clear that the set $\lbrace \sigma_1, \sigma_2 \rbrace$ is automatically not admissible whenever $(\sigma_1, \sigma_2) > 0$; this happens in Example~\ref{exm_reducible1}. On the other hand, by Remark~\ref{rem_AP_consequences}(\ref{rem_AP_consequences_b}) the set $\lbrace \sigma_1, \sigma_2 \rbrace$ is automatically admissible if $\sigma_1, \sigma_2 \notin \Pi$. Our remaining four examples show that in the situation $\lbrace \sigma_1, \sigma_2 \rbrace \cap \Pi \ne \varnothing$ there are no simple conditions like $(\sigma_1, \sigma_2) = 0$ or $(\sigma_1, \sigma_2) < 0$ under which the set $\lbrace \sigma_1, \sigma_2 \rbrace$ is automatically admissible, and this holds regardless of whether both $\sigma_1, \sigma_2$ are simple roots or only one of them is simple. The following table demonstrates the combinatorial differences of our examples.

\begin{center}
\begin{tabular}{|c|c|c|c|c|c|}
\hline
Example no. & \ref{exm_reducible1} & \ref{exm_reducible2} & \ref{exm_reducible3} & \ref{exm_reducible4} & \ref{exm_reducible5} \\

\hline

Property &
\renewcommand{\tabcolsep}{0pt}%
\begin{tabular}{c}
$\sigma_1 {\notin} \Pi$, $\sigma_2 {\in} \Pi$,\\
$(\sigma_1, \sigma_2) {>} 0$
\end{tabular}
&
\renewcommand{\tabcolsep}{0pt}%
\begin{tabular}{c}
$\sigma_1, \sigma_2 {\in} \Pi$,\\
$(\sigma_1, \sigma_2) {=} 0$
\end{tabular}
&
\renewcommand{\tabcolsep}{0pt}%
\begin{tabular}{c}
$\sigma_1 {\notin} \Pi$, $\sigma_2 {\in} \Pi$,\\
$(\sigma_1, \sigma_2) {=} 0$
\end{tabular}
&
\renewcommand{\tabcolsep}{0pt}%
\begin{tabular}{c}
$\sigma_1, \sigma_2 {\in} \Pi$,\\
$(\sigma_1, \sigma_2) {<} 0$
\end{tabular}
&
\renewcommand{\tabcolsep}{0pt}%
\begin{tabular}{c}
$\sigma_1 {\notin} \Pi$, $\sigma_2 {\in} \Pi$,\\
$(\sigma_1, \sigma_2) {<} 0$
\end{tabular}
\\
\hline
\end{tabular}
\end{center}

\begin{example} \label{exm_reducible1}
Let $G=\SL_3$ and $\Gamma = \ZZ^+\lbrace 3\varpi_1, \varpi_1 + \varpi_2 \rbrace$. Then $\Gamma^\perp = \varnothing$ and $\ZZ \Gamma = \ZZ\lbrace \alpha_1, \alpha_2 \rbrace$. The spherical roots of $G$ compatible with the lattice $\ZZ \Gamma$ are $\alpha_1, \alpha_2$ and $\alpha_1 + \alpha_2$. Next, $\mathcal K^1 = \lbrace \varrho_1, \varrho_2 \rbrace$ where $\varrho_1 = (\alpha_1^\vee - \alpha_2^\vee)/3$ and $\varrho_2 = \alpha_2^\vee$. We have $\Sigma(\Gamma) = \lbrace \alpha_1 + \alpha_2, \alpha_1 \rbrace$ with $\rho(\mathcal D(\alpha_1)) = \lbrace \varrho_1, 2\varrho_1 + \varrho_2 \rbrace$. As $\langle 2\varrho_1 + \varrho_2, \alpha_1 + \alpha_2 \rangle = 1 > 0$, the set $\lbrace \alpha_1 + \alpha_2, \alpha_1 \rbrace$ is not admissible. Thus there are two maximal admissible subsets $\lbrace \alpha_1 + \alpha_2 \rbrace$ and $\lbrace \alpha_1 \rbrace$.
\end{example}

\begin{example} \label{exm_reducible2}
Let $G = \SL_2 \times \SL_2$, let $\varpi_i$ (resp.~$\alpha_i$) be the fundamental weight (resp. simple root) of the $i$th factor of~$G$, and consider the monoid $\Gamma = \ZZ^+ \lbrace 2\varpi_1, 2l\varpi_1 + 2\varpi_2 \rbrace$, where $l$ is a positive integer. Then $\Gamma^\perp = \varnothing$ and $\ZZ \Gamma = \ZZ \lbrace \alpha_1, \alpha_2 \rbrace$. The spherical roots of $G$ compatible with the lattice $\ZZ \Gamma$ are $\alpha_1$ and $\alpha_2$. Next, $\mathcal K^1 = \lbrace \varrho_1, \varrho_2 \rbrace$ where $\varrho_1 = (\alpha_1^\vee - l\alpha_2^\vee)/2$ and $\varrho_2 = \alpha_2^\vee/2$. We have $\Sigma(\Gamma) = \lbrace \alpha_1, \alpha_2 \rbrace$ with $\rho(\mathcal D(\alpha_1)) = \lbrace \varrho_1, \varrho_1 + 2l \varrho_2 \rbrace$ and $\rho(\mathcal D(\alpha_2)) = \lbrace \varrho_2 \rbrace$. If $l = 1$ then the set $\lbrace \alpha_1, \alpha_2 \rbrace$ is not admissible since $\langle \varrho_1 + 2\varrho_2, \alpha_2 \rangle = 1$ but $\varrho_1 + 2 \varrho_2 \notin \rho(\mathcal D(\alpha_2))$. If $l \ge 2$ then the set $\lbrace \alpha_1, \alpha_2 \rbrace$ is not admissible because $\langle \varrho_1 + 2l \varrho_2, \alpha_
2 \rangle = l > 1$. Thus there are two maximal admissible subsets $\lbrace \alpha_1 \rbrace$ and $\lbrace \alpha_2 \rbrace$.
\end{example}

\begin{remark}
For $l = 2$ we recover Luna's example mentioned in~\cite[Example~3.20]{AB2}.
\end{remark}

\begin{example} \label{exm_reducible3}
Let $G = \SL_2 \times G_0$, where $G_0$ is a connected semisimple algebraic group, and $\Gamma = \ZZ^+ \lbrace \alpha, l\alpha + \sigma \rbrace$, where $\alpha$ is the simple root of $\SL_2$, $\sigma$ is a dominant weight of $G_0$ such that $\sigma \in \Sigma(G_0) \setminus \Pi$, and $l$ is a positive integer. Then $\Gamma^\perp = \sigma^\perp \setminus \lbrace \alpha \rbrace$ and $\ZZ \Gamma = \ZZ\lbrace \alpha, \sigma \rbrace$. The spherical roots of $G$ compatible with the lattice $\ZZ \Gamma$ are $\sigma$ and~$\alpha$. Let $\varrho_0$ be the element of $\mathcal L$ such that $\langle \varrho_0, \alpha \rangle = 0$ and $\langle \varrho_0, \sigma \rangle = 2$. Then $\mathcal K^1 = \lbrace \varrho_1, \varrho_2 \rbrace$ where $\varrho_1 = (\alpha^\vee - l \varrho_0)/2$ and $\varrho_2 = \varrho_0/2$. We have $\Sigma(\Gamma) = \lbrace \sigma, \alpha \rbrace$ with $\rho(\mathcal D(\alpha)) = \lbrace \varrho_1, \varrho_1 + 2l\varrho_2 \rbrace$. As $\langle \varrho_1 + 2l\varrho_2, \sigma \rangle = l > 0$, the set $\lbrace \sigma, \alpha \rbrace$ is not admissible. Thus there are two maximal admissible subsets $\lbrace \sigma \rbrace$ and $\lbrace \alpha \rbrace$.
\end{example}

\begin{example} \label{exm_reducible4}
Let $G = \SL_4$ and $\Gamma = \ZZ^+ \lbrace 2\varpi_1 + (2l+1) \varpi_2, 2\varpi_2, \varpi_1 + \varpi_3 \rbrace$, where $l$ is a positive integer. Then $\Gamma^\perp = \varnothing$ and $\ZZ \Gamma = \ZZ\lbrace \alpha_1, \alpha_2, \alpha_3 \rbrace$. The spherical roots of $G$ compatible with the lattice $\ZZ \Gamma$ are $\alpha_1$, $\alpha_2$, $\alpha_3$, $\alpha_1 + \alpha_2$, and $\alpha_2 + \alpha_3$. Next, $\mathcal K^1 = \lbrace \varrho_1, \varrho_2, \varrho_3 \rbrace$ where $\varrho_1 = (\alpha_1^\vee - \alpha_3^\vee)/2$, $\varrho_2 = - (2l+1) \alpha_1^\vee/4 + \alpha_2^\vee/2 + (2l+1) \alpha_3^\vee/4$, and $\varrho_3 = \alpha_3^\vee$.
We have $\Sigma(\Gamma) = \lbrace \alpha_1, \alpha_2 \rbrace$ with $\rho(\mathcal D(\alpha_1)) = \lbrace \varrho_1, \varrho_1 + \varrho_3 \rbrace$ and $\rho(\mathcal D(\alpha_2)) = \lbrace \varrho_2, \varrho_2 + (2l+1) \varrho_1 \rbrace$. If $l = 1$ then the set $\lbrace \alpha_1, \alpha_2 \rbrace$ is not admissible since $\langle \varrho_2 + (2l+1) \varrho_1, \alpha_1 \rangle = 1$ but $\varrho_2 + (2l+1) \varrho_1 \notin \rho(\mathcal D(\alpha_1))$. If $l \ge 2$ then the set $\lbrace \alpha_1, \alpha_2 \rbrace$ is not admissible because $\langle \varrho_2 + (2l+1) \varrho_1, \alpha_1 \rangle = l > 1$. Thus there are two maximal admissible subsets $\lbrace \alpha_1 \rbrace$ and $\lbrace \alpha_2 \rbrace$.
\end{example}

\begin{example} \label{exm_reducible5}
Let $G = \SL_4$ and $\Gamma = \ZZ^+ \lbrace \varpi_1 + (2l + 1)\varpi_3, \omega_2, 2\omega_3 \rbrace$, where $l$ is a positive integer. Then $\Gamma^\perp = \varnothing$ and $\ZZ\Gamma = \ZZ \lbrace \alpha_1, \alpha_2, (\alpha_1 + \alpha_3)/2 \rbrace$. The spherical roots of $G$ compatible with the lattice $\ZZ \Gamma$ are $\alpha_1$, $\alpha_2$, $\alpha_3$, $\alpha_1 + \alpha_2$, and $\alpha_2 + \alpha_3$. Next, $\mathcal K^1 = \lbrace \varrho_1, \varrho_2, \varrho_3 \rbrace$ where $\varrho_1 = \alpha_1^\vee$, $\varrho_2 = \alpha_2^\vee$ and $\varrho_3 = (\alpha_3^\vee - (2l+1) \alpha_1^\vee)/2$. We have $\Sigma(\Gamma) = \lbrace \alpha_1 + \alpha_2, \alpha_3 \rbrace$ with $\rho(\mathcal D(\alpha_3)) = \lbrace \varrho_3, \varrho_3 + (2l+1) \varrho_1 \rbrace$. As $\langle \varrho_3 +(2l+1) \varrho_1, \alpha_1 + \alpha_2 \rangle = l > 0$, the set $\lbrace \alpha_1 +\alpha_2, \alpha_3 \rbrace$ is not admissible. Thus there are two maximal admissible subsets $\lbrace \alpha_1 + \alpha_2 \rbrace$ and $\lbrace \alpha_3 \rbrace$.
\end{example}

\begin{remark}
In Examples~\ref{exm_reducible2}, \ref{exm_reducible3}--\ref{exm_reducible5} the set $\Sigma(\Gamma)$ is admissible whenever $l = 0$.
\end{remark}

\begin{remark}
It would be interesting to construct examples of reducible moduli schemes $\mathrm M_\Gamma$ revealing other features. For instance, are there examples of $\mathrm M_\Gamma$ with arbitrarily many irreducible components and/or irreducible components of arbitrarily large dimension?
\end{remark}

\subsection{Examples where \texorpdfstring{$\mathrm M_\Gamma$}{M_Gamma} is a non-reduced point}
\label{subsec_examples_non-reduced}

In the examples below, the fact that $\mathrm M_\Gamma$ is a non-reduced point follows from $\Sigma(\Gamma) = \varnothing$ and $\Dev(\Gamma) \ne \varnothing$ thanks to Theorem~\ref{indexationofIrrComp} and Corollary~\ref{crl_reducedness_criterion}.

\begin{example}
Let $G = \SL_4$ and $\Gamma = \ZZ^+ \lbrace \lambda_1, \lambda_2, \lambda_3, \lambda_4 \rbrace$, where
\begin{align*}
\lambda_1 &= \varpi_2 + \varpi_3,\\
\lambda_2 &= 2\varpi_1 + 2\varpi_2 + 2\varpi_3, \\
\lambda_3 &= 2\varpi_1 + 2\varpi_2 + 3\varpi_3, \\
\lambda_4 &= 4\varpi_1 + 4\varpi_2 + 7\varpi_3.
\end{align*}
Then $\Gamma^\perp = \varnothing$ and $\ZZ \Gamma =  \ZZ\{2\varpi_1, \varpi_2, \varpi_3\}$. Note that $\lambda_3 = (\lambda_2 + \lambda_4)/3$ and $\Gamma$ is the intersection of the lattice $\ZZ \Gamma$ with the cone $\QQ^+\Gamma$, so that $\Gamma$ is saturated. The spherical roots of $G$ compatible with the lattice $\ZZ \Gamma$ are $\alpha_1$ and~$\alpha_3$. One easily checks that $\Sigma(\Gamma) = \varnothing$. Next, $\mathcal K^1 = \lbrace \varrho_1, \varrho_2, \varrho_3 \rbrace$ where
\begin{align*}
\varrho_1 &= 3\alpha_1^\vee/2 + 2\alpha_2^\vee - 2\alpha_3^\vee,\\
\varrho_2 &= - \alpha_2^\vee + \alpha_3^\vee, \\
\varrho_3 &= - \alpha_1^\vee + \alpha_2^\vee.
\end{align*}
We have $\langle \varrho_1, \alpha_1 \rangle = \langle \varrho_2, \alpha_1 \rangle = 1$, $\langle \varrho_3, \alpha_1 \rangle = -3$, and $\alpha_1^\vee = (2\varrho_1 + 4\varrho_2)/3$, which implies that $\alpha_1$ and $\varrho_1, \varrho _2$ satisfy conditions~(\ref{DR1})--(\ref{DR3}), whence $\alpha_1 \in \Dev(\Gamma)$. One easily checks that $\alpha_3 \notin \Dev(\Gamma)$ and so $\Dev(\Gamma) = \lbrace \alpha_1 \rbrace$. Therefore $\mathrm M_\Gamma$ is a non-reduced point.
\end{example}

\begin{example}
Let $G = \SL_2 \times \SL_2 \times \SL_2$, let $\varpi_i$ denote the fundamental weight of the $i$th factor of~$G$, and consider the monoid
\[
\Gamma = \ZZ^+ \lbrace 2\varpi_1, \varpi_2 + \varpi_3, 2p \varpi_1 + \varpi_2, 2q \varpi_1 + \varpi_3 \rbrace,
\]
where $\varpi_i$ stands for the fundamental weight of the $i$th factor of~$G$ and $p, q$ are positive integers. Then $\Gamma^\perp = \varnothing$ and $\ZZ \Gamma = \ZZ \lbrace 2\varpi_1, \varpi_2, \varpi_3 \rbrace$. Note that $\Gamma$ is the intersection of the lattice $\ZZ \Gamma$ with the cone $\QQ^+ \Gamma$, so that $\Gamma$ is saturated. The only spherical root of $G$ compatible with the lattice $\ZZ \Gamma$ is~$\alpha_1$. Next, $\mathcal K^1 = \lbrace \varrho_1, \varrho_2, \varrho_3, \varrho_4 \rbrace$ where
\begin{align*}
\varrho_1 &= \alpha_1^\vee/2 - p \alpha_2^\vee + p \alpha_3^\vee,\\
\varrho_2 &= \alpha_1^\vee/2 + q \alpha_2^\vee - q \alpha_3^\vee, \\
\varrho_3 &= \alpha_2^\vee,\\
\varrho_4 &= \alpha_3^\vee.
\end{align*}
Clearly, $\alpha^\vee = \frac{2q}{p+q} \varrho_1 + \frac{2p}{p+q} \varrho_2$. Then it is easy to see that  $\alpha_1 \in \Sigma(\Gamma)$ when $p = q$ and $\alpha_1 \in \Dev(\Gamma)$ when $p \ne q$. It follows that $\mathrm M_\Gamma$ is an affine line when $p = q$ and a non-reduced point when $p \ne q$.
\end{example}

\begin{remark}
It would be interesting to construct examples of non-reduced moduli schemes $\mathrm M_\Gamma$ revealing other features. For instance, are there examples of reducible and non-reduced~$\mathrm M_\Gamma$, examples of irreducible non-reduced $\mathrm M_\Gamma$ of arbitrarily large dimension, examples where $\mathrm M_\Gamma$ is a non-reduced point with tangent space of arbitrarily large dimension?
\end{remark}



\begin{thebibliography}{KKMS73}


\setlength{\itemsep}{-\parsep}

\bibitem[AB05]{AB}
V.~Alexeev, M.~Brion, \textit{Moduli of affine schemes with
reductive group action}, J. Algebraic Geom. \textbf{14} (2005), no.~1, 83--117.

\bibitem[AB06]{AB2}
V.~Alexeev, M.~Brion, \textit{Stable spherical varieties and their moduli}, IMRP Int. Math. Res. Pap. \textbf{2006}, Art. ID 46293, 57~pp.

\bibitem[ACF15]{ACF15}
R.~Avdeev, S.~Cupit-Foutou, \textit{New and old results on spherical varieties via moduli theory}, preprint (2015), see
\href{http://arxiv.org/abs/1508.00268}%
{\texttt{arXiv:1508/00268\,[math.AG]}}.

\bibitem[Bo68]{Bo}
N.~Bourbaki, \'El\'ements de math\'ematique. Groupes et Alg\`ebres de Lie. Chapitre IV: Groupes de Coxeter et Syst\`emes de Tits. Chapitre V: Groupes engendr\'es par des r\'eflexions. Chapitre VI: Syst\`emes de racines. Actualit\'es Scientifiques et Industrielles, No. 1337 Hermann, Paris, 1968.

\bibitem[BCF08]{BraCu08}
P.~Bravi, S.~Cupit-Foutou, \textit{Equivariant deformations of the affine multicone over a flag variety}, Adv. Math. \textbf{217} (2008), no.~6, 2800--2821; see also
\href{https://arxiv.org/abs/math/0603690}%
{\texttt{arXiv:math/0603690\,[math.AG]}}.

\bibitem[BL11]{BL}
P.~Bravi, D.~Luna, \textit{An introduction to wonderful varieties with many examples of type~$\mathsf F_4$}, J.~Algebra \textbf{329} (2011), 4--51; see also \href{http://arxiv.org/abs/0812.2340}%
{\texttt{arXiv:0812.2340\,[math.AG]}}.

\bibitem[BP14]{BraP14}
P.~Bravi, G.~Pezzini, \textit{Wonderful subgroups of reductive groups and spherical systems}, J.~Algebra \textbf{409} (2014), 101--147; see also
\href{https://arxiv.org/abs/1103.0380}%
{\texttt{arXiv:1103.0380\,[math.AG]}}.

\bibitem[BvS16]{BvS}
P.~Bravi, B.~Van Steirteghem, \textit{The moduli scheme of affine spherical varieties with a free weight monoid}, Int. Math. Res. Not. IMRN \textbf{2016}, no.~15, 4544--4587; see also
\href{https://arxiv.org/abs/1406.6041}%
{\texttt{arXiv:1406.6041\,[math.AG]}}.

\bibitem[Br90]{Br90}
M.~Brion, \textit{Vers une g\'en\'eralisation des espaces sym\'etriques}, J.~Algebra \textbf{134} (1990), no.~1, 115--143.

\bibitem[Br13]{Br13}
M. Brion, \textit{Invariant Hilbert schemes}, Handbook of Moduli, Vol. I, Adv. Lect. in Math. \textbf{24}, 63--118, International Press, 2013; see also
\href{https://arxiv.org/abs/1102.0198}%
{\texttt{arXiv:1102.0198\,[math.AG]}}.

\bibitem[BP87]{BriP}
M.~Brion, F.~Pauer, \textit{Valuations des espaces homog\`enes sph\'eriques}, Comment. Math. Helv. \textbf{62} (1987), no.~2, 265--285.

\bibitem[Cu14]{Cu}
S.~Cupit-Foutou, \textit{Wonderful varieties: a geometrical realization}, preprint (2014), see \\
\href{https://arxiv.org/abs/0907.2852v4}%
{\texttt{arXiv:0907.2852v4\,[math.AG]}}.

\bibitem[CLS11]{CLS}
D.\,A.~Cox, J.\,B.~Little, H.\,K.~Schenck, \textit{Toric varieties}, Graduate Studies in Mathematics, vol.~\textbf{124}, American Mathematical Society, Providence, RI, 2011.

\bibitem[Ja07]{Jan}
S.~Jansou, \textit{D\'eformations des c\^ones de vecteurs primitifs}, Math. Ann. \textbf{338} (2007), no.~3, 627--667; see also
\href{https://arxiv.org/abs/math/0506133}%
{\texttt{arXiv:math/0506133\,[math.AG]}}.

\bibitem[KKMS73]{KKMS73}
G.~Kempf, F.~Knudsen, D.~Mumford, B.~Saint-Donat, \textit{Toroidal embeddings I}, Lecture Notes in Mathematics, vol.~\textbf{339}, Springer, Berlin--New York, 1973.

\bibitem[Kn91]{Kn91}
F.~Knop, \textit{The Luna-Vust theory of spherical embeddings}, Proceedings of the Hyderabad Conference on Algebraic Groups (Hyderabad, India, 1989), Manoj Prakashan, Madras, 1991, 225--249.

\bibitem[Kn94]{Kn94}
F.~Knop, \textit{The asymptotic behavior of invariant collective motion}, Invent. math. \textbf{116} (1994), no.~1--3, 309--328.

\bibitem[Kn96]{Kn96}
F.~Knop, \textit{Automorphisms, root systems, and compactifications of homogeneous varieties}, J.~Amer. Math. Soc. \textbf{9} (1996), no.~1, 153--174.

\bibitem[Lo09a]{Lo09a}
I.~Losev, \textit{Uniqueness property for spherical homogeneous spaces}, Duke Math. J. \textbf{147} (2009), no.~2, 315--343; see also
\href{https://arxiv.org/abs/math/0703543}%
{\texttt{arXiv:math/0703543\,[math.AG]}}.

\bibitem[Lo09b]{Lo09b}
I.\,V.~Losev, \textit{Proof of the Knop conjecture}, Ann. Inst. Fourier \textbf{59} (2009), no.~3, 1105--1134; see also
\href{https://arxiv.org/abs/math/0612561}%
{\texttt{arXiv:math/0612561\,[math.AG]}}.

\bibitem[Lu97]{Lu97}
D.~Luna, \textit{Grosses cellules pour les vari\'et\'es sph\'eriques}, in \textit{Algebraic groups and Lie groups}, Austral. Math. Soc. Lect. Ser.~\textbf{9}, Cambridge Univ. Press, Cambridge, 1997, 267--280.

\bibitem[Lu01]{Lu01}
D.~Luna, \textit{Vari\'et\'es sph\'eriques de type~A}, Inst. Hautes \'Etudes Sci. Publ. Math. \textbf{94} (2001), 161--226.

\bibitem[LV83]{LV}
D.~Luna, Th.~Vust, \textit{Plongements d'espaces homog\`enes}, Comment. Math. Helv. \textbf{58} (1983), no.~2, 186--245.

\bibitem[PvS12]{PvS12}
S.\,A.~Papadakis, B.~Van Steirteghem, \textit{Equivariant degenerations of spherical modules for groups of type $\mathsf A$}, Ann. Inst. Fourier \textbf{62} (2012), no.~5, 1765--1809; see also
\href{http://arxiv.org/abs/1008.0911}%
{\texttt{arXiv:1008.0911\,[math.AG]}}.

\bibitem[PvS16]{PvS16}
S.\,A.~Papadakis, B.~Van Steirteghem, \textit{Equivariant degenerations of spherical modules: part II}, Algebr. Represent. Theory \textbf{19} (2016), no.~5, 1135--1171; see also
\href{http://arxiv.org/abs/1505.07446}%
{\texttt{arXiv:1505.07446\,[math.AG]}}.

\bibitem[Pe17]{Pe17}
G.~Pezzini, \textit{Spherical varieties: applications and generalizations}, Representation Theory~-- Current trends and perspectives, EMS Series of Congress Reports, EMS, Z\"urich, 2017, 603--612.

\bibitem[Po86]{Po86}
V.\,L.~Popov, \textit{Contraction of the actions of reductive algebraic groups}, Math. USSR-Sb. \textbf{58} (1987), no.~2, 311--335.

\bibitem[PV94]{VP}
V.\,L.~Popov, E.\,B.~Vinberg, \textit{Invariant theory}, Algebraic
geometry. IV: Linear algebraic groups, invariant theory, Encycl.
Math. Sci., vol. \textbf{55}, 1994, pp. 123--278.

\bibitem[Ro61]{Ro61}
M.~Rosenlicht, \textit{Toroidal algebraic groups}, Proc. Amer. Math. Soc. \textbf{12} (1961), no.~6, 984--988.

\bibitem[Ti11]{Tim}
D.\,A.~Timashev, \textit{Homogeneous spaces and equivariant embeddings}, Encycl. Math. Sci., vol.~\textbf{138}, Springer-Verlag, Berlin Heidelberg, 2011.

\bibitem[VK78]{VK78}
E.\,B.~Vinberg, B.\,N.~Kimel'fel'd, \textit{Homogeneous domains on flag manifolds and spherical subgroups of semisimple Lie groups}, Funct. Anal. Appl. \textbf{12} (1978), no.~3, 168--174.

\bibitem[VP72]{VP72}
E.\,B.~Vinberg, V.\,L.~Popov, \textit{On a class of quasihomogeneous affine varieties}, Math. USSR-Izv. \textbf{6} (1972), no.~4, 743--758.

\bibitem[Vu76]{Vu76}
Th.~Vust, \textit{Sur la th\'eorie des invariants des groupes classiques}, Ann. Inst. Fourier (Grenoble) \textbf{26} (1976), no.~1, 1--31.

\end{thebibliography}
\end{document}